\newtheorem{thm}{Theorem}
\newtheorem{lem}[thm]{Lemma}
\newtheorem{crl}[thm]{Corollary}
\newtheorem{rmk}{Remark}
\numberwithin{equation}{section}
\numberwithin{thm}{section}
\numberwithin{rmk}{section}
\numberwithin{nt}{section}
\numberwithin{Example}{section}
\let\oldcite\cite
\def\cite#1{\expandafter\def\csname#1\endcsname{}\oldcite{#1}}
\let\oldbibitem\bibitem
\def\bibitem#1{\@ifundefined{#1}{\typeout{#1 nonused}}
{\typeout{#1 used}}\oldbibitem{#1}}
\newcommand{\B}[1]{{\mathbb{#1}}}
\newcommand{\f}[1]{{\boldsymbol{#1}}}
\newcommand{\wha}[1]{\widehat{#1}}
\font\rus=wncyr10 
\DeclareMathOperator{\Ker}{{{Ker}}}
\DeclareMathOperator{\im}{{{Im}}}
\DeclareMathOperator{\LGen}{{\mathtt{LGen}}}
\newcommand{\alp}{\alpha}
\newcommand{\gam}{\gamma}
\newcommand{\lam}{\lambda}
\newcommand{\Lam}{\Lambda}
\newcommand{\ome}{\omega}
\newcommand{\Ome}{\Omega}
\newcommand{\der}{\partial}
\newcommand{\ten}{\otimes}
\newcommand{\wed}{\wedge}
\newcommand{\con}{\,\lrcorner\,}
\newcommand{\Fla}{^{\flat}{}}
\newcommand{\Sha}{^{\sharp}{}}
\newcommand{\END}{{\,\text{\footnotesize$\square$}}}
\newcommand{\db}[1]{\,{{#1}\!{#1}}\,}
\newcommand{\M}[1]{{\mathcal{#1}}}
\newcommand{\K}[1]{\text{\rus{#1}}}
\newcommand{\com}{{}\circ{}}
\newcommand{\Ele}{^\mathfrak{e}{}}
\newcommand{\Elg}{^\mathfrak{g}{}}
\newcommand{\Joi}{^\mathfrak{j}{}}
\newcommand{\tecka}{\text{\Large{.}}}
\begin{document}
\markboth{J. Jany\v{s}ka}
{Local Lie algebras of pairs}

\title[Remarks on Local Lie algebras of pairs]{Remarks on Local Lie algebras 
\\
of pairs of functions}

\author{Josef Jany\v ska}

\thanks{Supported
by the  grant GA \v CR 14--02476S}

\address{\ 
\newline
Department of Mathematics and Statistics, Masaryk University
\newline
Kotl\'a\v rsk\'a 2, 611 37 Brno, Czech Republic
\newline
e-mail: janyska@math.muni.cz}
  
 \begin{abstract}
We study local Lie algebras of 
pairs of functions which generate infinitesimal symmetries of almost-cosymplectic-contact structures of  odd dimensional manifolds.
\end{abstract} 
  
\maketitle

\bigskip

\noindent{{\it Keywords}:
Almost-cosymplectic-contact structure; almost-coPoisson-Jacobi structure; infinitesimal symmetry; Lie algebra.
\\
{\ }
\\
{\it Mathematics Subject Classification 2010}:
53C15, 53B99, 17B66.
}

\section{Introduction}
\setcounter{equation}{0}
It is very well known, \cite{Kir76}, that (1-dimensional) local Lie algebras of functions on a $(2n+1)$-dimensional manifold $ \f M $ are in one-to-one correspondence with  \emph{Jacobi structures} on $ \f M $ given by a skew symmetric 2-vector field $ \Lambda $ and a vector field $ E $ such that 
\begin{equation*}
[E,\Lam] = 0\,, \qquad [\Lambda,\Lambda] = - 2\, E\wedge\Lambda\,,
\end{equation*}
where $[,]$ is the Schouten-Nijenhuis bracket (see, for example, \cite{Vai94}) of skew symmetric multi-vector fields.

 Then the \emph{Jacobi bracket}
\begin{equation*}
[f,h] = \{f,h\} - f\, E\tecka h + h \, E\tecka f\,,
\end{equation*}
where $\{f,h\} = \Lambda(df,dh)$ is the \emph{Poisson bracket}, defines on the sheaf of functions $C^\infty(\f M)$ the structure of  a local (Jacobi) Lie algebra.

Let us assume the subsheaf $C_E^\infty(\f M)$ of functions constant on integral curves of the vector field $ E $, i.e. such that $E\tecka f =0$ (conserved functions in the terminology of \cite{JanVit12}).
 Then the restriction of the above Jacobi bracket defines the Lie subalgebra $(C_E^\infty(\f M); \{,\}) \subset (C^\infty(\f M); [,]) $ of conserved functions. Really, if $ f,h \in C_E^\infty(\f M) $ then
$$  
E \tecka \{ f,h\} = \{E \tecka f , h\} + \{ f , E\tecka h\} = 0\,.
$$
Moreover, the \emph{Hamilton-Jacobi lift}
\begin{equation*}
X_f = df\Sha - f\, E\,
\end{equation*}
of a function $f\in C_E^\infty(\f M)$ is an infinitesimal symmetry of $(E,\Lambda)$, i.e. $L_{X_f} E = [X_f,E] = 0$ and
$L_{X_f}\Lambda = [X_f,\Lambda] =0$.
In what follows we shall use the notation $\Lambda\Sha(\alpha) := \alpha\Sha = i_{\alpha}\Lambda$ for any 1-form $\alpha$.

If the pair $(E,\Lam)$ is \emph{regular} (\emph{transitive} in the terminology of \cite{Kir76}), i.e. $E\wedge \Lam^n \not\equiv 0$,
then there exist the unique 1-form $\omega$ such that $i_E \omega = 1$ and $i_\omega\Lambda= 0$. Moreover, $\omega$ is a \emph{contact form}, i.e. $\omega\wedge d\omega^n \not\equiv 0$ and $E$ is the Reeb vector field of the contact structure  $(\omega,\Omega= d\omega)$, i.e. $i_E\Omega = 0$. 
The pairs  $(\omega,\Omega)$ and $(E,\Lam)$ are said to be \emph{mutually dual}. 
It is easy to see that the  Hamilton-Jacobi lift of a conserved function is an infinitesimal symmetry of the contact pair
 $(\omega,\Omega)$, i.e. $L_{X_f} \omega = 0$ and $L_{X_f}\Omega = 0$.
 The Hamilton-Jacobi lift of conserved functions is a Lie algebra homomorphism from the Lie algebra $(C^\infty_E(\f M), \{,\})$
 to the Lie algebra $(\M L(\omega,\Omega), [,]) \subset (\M X(\f M),[,])$ of infinitesimal symmetries of the contact structure $(\omega,\Omega)$ (respective to the Lie algebra $(\M L(E,\Lambda),[,])$ of infinitesimal symmetries of the Jacobi structure $(E,\Lambda)$).
Here $ (\M X(\f M),[,]) $ is the Lie algebra of vector fields.

\smallskip
In \cite{JanMod09} dual contact and Jacobi structures were generalized in the following sense.
An 
\emph{almost-cosymplectic-contact (regular) structure (pair)}
is
given by
a pair 
$(\ome,\Ome)$
such that
\begin{equation*}
d\Ome = 0 \,,\qquad
\ome \wed \Ome^n \not\equiv 0 \,.
\end{equation*}
According to \cite{Lic78} there exists a unique dual 
\emph{almost-coPoisson-Jacobi structure (pair)\/} 
given by a
pair $(E,\Lam)$ such that
\begin{equation*}
(\Ome\Fla_{|\im(\Lam\Sha)})^{-1}
= \Lam\Sha_{|\im(\Ome\Fla)} \,,
\quad i_E\ome =1\,,
\quad i_E\Ome =0\,,
\quad i_\ome\Lam = 0\,,
\end{equation*}
where $\Omega\Fla:T\f M \to T^*\f M$ is given by $ \Omega\Fla(X) := X\Fla = i_X\Omega$. 
Then (see \cite{JanMod09})
\begin{equation*}
[E,\Lam] = - E\wed \Lam\Sha(L_E\ome)\,,
\qquad
[\Lam,\Lam] = 2 \, E\wed (\Lam\Sha\ten\Lam\Sha)(d\ome)\,.
\end{equation*}

\begin{rmk}\label{Rm: 1.1}
{\rm
The almost-cosymplectic-contact pair and the dual al\-most-coPoisson-Jacobi pair generalize not only the contact and the dual Jacobi structures but also cosymplectic structures.
Really, if $d\ome =0$ we obtain a \emph{cosymplectic pair} (see, for example, \cite{deLTuy96}).
The corresponding dual pair is \emph{coPoisson pair}, \cite{JanMod09},
given by the pair
$(E, \Lam)$
such that
$
[E, \Lam] = 0 \,,
$
$
[\Lam, \Lam] = 0 \,.
$
}
\hfill\END
\end{rmk}

\begin{rmk}\label{Rm: 1.2}
{\rm
For cosymplectic and contact structures we have distinguished forms $ \omega,\,\, \Omega$ and vector fields  $E,\,\, \Lambda $. But for an almost-cosymplectic-contact structure we have distinguished forms $ \omega, L_E\omega,$ $\Omega,$ $ d\omega $ and distinguished vector fields $ E, (L_E\omega)\Sha, \Lambda, (\Lambda\Sha\otimes\Lambda\Sha)(d\omega) $.
}
\hfill\END
\end{rmk}

In what follows we assume an odd dimensional manifold $ \f M $ with a regular almost-cosymplectic-contact structure $ (\omega,\Omega)  $. We assume the dual (regular) almost-coPoisson-Jacobi structure  $(E,\Lambda)$. 
 Then we have $ \Ker (\omega) = \im(\Lambda\Sha) $ and $ \Ker (E) = \im(\Omega\Fla) $ and we have the splitting
$$
T\f M =  \im(\Lambda\Sha)\oplus \langle E \rangle\,, \qquad
T^*\f M = \im(\Omega\Fla) \oplus \langle \omega \rangle\,,
$$ 
i.e. any vector field $X$ and any 1-form $\beta$ can be decomposed as
\begin{equation}\label{Eq: 1.7}
X = X_{(\alpha,h)} = \alpha\Sha + h\, E\,, \qquad
\beta = \beta_{(Y,f)} = Y\Fla + f\, \omega\,,
\end{equation}
where $h,f\in C^\infty(\f M)$, $\alpha$ be a 1-form and $Y$ be a vector field. 
Moreover, $h = \omega(X_{(\alpha,h)})$ and $f = \beta_{(Y,f)}(E)$. Let us note that the splitting \eqref{Eq: 1.7} is not defined uniquely, really $ X_{(\alpha_1,h_1)} = X_{(\alpha_2,h_2)}$ if and only if $\alpha_1\Sha = \alpha_2\Sha$ and $h_1= h_2$, i.e. $\alpha_1\Sha - \alpha_2\Sha = 0$ that means that $\alp_1 - \alpha_2 \in \langle\omega\rangle$. Similarly
$\beta_{(Y_1,f_1)} = \beta_{(Y_2,f_2)}$ if and only if $Y_1 - Y_2 \in \langle E \rangle$ and $f_1 = f_2$.

The projections $p_2:T\f M \to \langle E \rangle$ and $p_1: T\f M \to \im(\Lambda\Sha) = \Ker(\omega)$ are given by $ X \mapsto \omega(X) \, E $ and $ X \mapsto X - \omega(X) \, E $\,. Equivalently, the projections $q_2:T^{*}\f M \to \langle \omega \rangle$ and $q_1: T^{*}\f M \to \im(\Omega\Fla) = \Ker(E)$ are given by $ \beta \mapsto \beta(E) \, \omega $ and $ \beta \mapsto \beta - \beta(E) \, \omega $\,.
Moreover, $\Lambda\Sha \com \Omega\Fla = p_1$ and $\Omega\Fla \com \Lambda\Sha = q_1$.

\begin{rmk}\label{Rm: 1.3}
{\rm
Let us consider the pair $(\omega,F)$ where $F= \Omega + d\omega$ is a closed 2--form. Then the pair $(\omega,F)$ is not generally an almost-cosymplectic-contact pair because it may not to be regular. We shall use the form $F$ later in Theorem \ref{Th: 3.1}.
}
\hfill\END
\end{rmk}

In \cite{Jan16} we have studied infinitesimal symmetries of tensor fields $\omega$, $\Omega$, $E$, $\Lambda$ generating the almost-cosymplectic-contact and the dual almost-coPoisson-Jacobi structures.
Such symmetries are vector fields of the type $X_{(\alpha,h)} = \alpha\Sha + h \, E$, where $\alpha$ and $h $ meet certain conditions and the fact that they generate infinitesimal symmetries of a tensor field defines a Lie algebra structure on some subsheaf of $\Omega^1(\f M) \times C^\infty(\f M)$ of generators $ (\alpha,h) $ of infinitesimal symmetries.

In this paper we shall study the situation where the 1-form $\alpha$ is locally exact, i.e. $\alpha = df$ for $f\in C^\infty(\f M)$. This leads to local Lie algebras of pairs of functions generating infinitesimal symmetries of some tensor fields. Such Lie algebras are 2-dimensional local Lie algebras in the sense of \cite{Kir76}.  

\smallskip
Generally, we can define local Lie algebra structure in $C^{\infty}(\f M) \times C^{\infty}(\f M)$ by a (double) bracket $\db[(f_1,h_1);(f_2,h_2)\db]$
in $C^{\infty}(\f M)\times C^{\infty}(\f M)$ satisfying the following conditions:

(1) it defines a Lie algebra structure in $C^{\infty}(\f M)\times C^{\infty}(\f M)$ over $\B R$,

(2) $\db[(f_1,h_1);(f_2,h_2)\db]$ is continuous in $f_i$ and $h_i$, $i= 1,2$,

(3) $\mathrm{supp}\db[(f_1,h_1);(f_2,h_2)\db]\subset \mathrm{supp}(f_1,h_1)\cap \mathrm{supp}(f_2,h_2)$ for each $(f_1,h_1)$ and $(f_2,h_2)$, where $\mathrm{supp}(f_i,h_i)=\mathrm{supp}(f_i)\cap \mathrm{supp}(h_i)$.

\smallskip
As an example we can consider two Jacobi pairs $(E_i,\Lambda_i)$, $i=1,2$,
and the corresponding Jacobi brackets $[,]_i$. Then
the bracket 
$$
\db[(f_1,h_1);(f_2,h_2)\db] = ([f_1,f_2]_1;[h_1,h_2]_2)
$$ 
defines the Lie algebra structure in $C^{\infty}(\f M)\times C^{\infty}(\f M)$. As far as we know, the classification of local Lie algebras of pairs of functions is not known. In the paper we shall describe several subsheafs of  $C^{\infty}(\f M)\times C^{\infty}(\f M)$ with a Lie algebra structure given by the fact that pairs of functions generate infinitesimal symmetries of basic tensor fields
$ \omega,\, \Omega,\, E,\, \Lambda $ given by the almost-cosymplectic-contact and the dual almost-coPoisson-Jacobi structures.

\smallskip
All manifolds and mappings are assumed to be smooth.

\section{Local Lie algebras of generators of infinitesimal symmetries}
\setcounter{equation}{0}

In this section we shall study local Lie algebras of pairs of functions which generate infinitesimal symmetries of basic tensor fields.

 For two functions $ f,h\in C^\infty(\f M) $ we define their {\it pre-Hamiltonian lift} to vector fields on $ \f M $ by 
\begin{equation}\label{Eq: 2.1}
X_{(f,h)} = df\Sha + h\, E\,.
\end{equation} 
 
\begin{lem}\label{Lm: 2.1}
Let $(f_i,h_i) \in C^\infty(\f M) \times  C^\infty(\f M)$, $i= 1,2,$ be two pairs of functions on $\f M$. Then
\begin{align}\label{Eq: 2.2}
& [X_{(f_1,h_1)},  X_{(f_2,h_2)}]
 =
\big(
d\{f_1,f_2\} + (E{\tecka}f_1)\,(L_{df_2\Sha} + h_2\, L_E)\omega
\\
& \quad\nonumber
- (E{\tecka}f_2)\,(L_{df_1\Sha} + h_1\, L_E)\omega
- h_2\, d(E{\tecka}f_1) + h_1\, d(E{\tecka}f_2)
\big)\Sha
\\
& \quad\nonumber
+ \big(
 \{f_1,h_2\} - \{f_2,h_1\} - d\omega(df_1\Sha,df_2\Sha) 
\\
& \quad\nonumber
 + h_1\, (E{\tecka}h_2 + \Lambda(L_E\omega,df_2)) 
 - h_2\, (E{\tecka}h_1 + \Lambda(L_E\omega,df_1 ))
\big)\, E\,.
\end{align}
 \end{lem}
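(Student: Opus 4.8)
The plan is to compute the bracket by a direct expansion that reduces everything to the two structure identities for $[E,\Lam]$ and $[\Lam,\Lam]$ recorded just before Remark~\ref{Rm: 1.1}. Writing $X_{(f_i,h_i)} = df_i\Sha + h_i\,E$ and using bilinearity together with the Leibniz rule $[\phi X, \psi Y] = \phi\psi\,[X,Y] + \phi\,(X\tecka\psi)\,Y - \psi\,(Y\tecka\phi)\,X$, I would split the bracket into four pieces: the principal term $[df_1\Sha, df_2\Sha]$, the two cross terms $[df_1\Sha, h_2\,E]$ and $[h_1\,E, df_2\Sha]$, and $[h_1\,E, h_2\,E]$. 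Two elementary derivatives feed all the Leibniz contributions: $df\Sha\tecka h = \Lam(df,dh) = \{f,h\}$ and $E\tecka h$. The last piece is immediate, $[h_1\,E,h_2\,E] = h_1\,(E\tecka h_2)\,E - h_2\,(E\tecka h_1)\,E$, and already supplies the pure $(E\tecka h_i)$ contributions to the $E$-component.

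Next I would dispose of $[df\Sha, E]$. From $L_E(\Lam\Sha\alpha) = \Lam\Sha(L_E\alpha) + i_\alpha[E,\Lam]$ and $L_E\,df = d(E\tecka f)$, together with the structure identity $[E,\Lam] = -\,E\wed\Lam\Sha(L_E\ome)$, contracting with $df$ gives
\[
[df\Sha, E] = -\,(d(E\tecka f))\Sha + (E\tecka f)\,(L_E\ome)\Sha - \Lam(L_E\ome, df)\,E .
\]
Multiplying this by $h_2$ (resp.\ $-h_1$) and adding the Leibniz derivative term $\{f_1,h_2\}\,E$ (resp.\ $-\{f_2,h_1\}\,E$) then produces exactly the blocks $-h_2\,d(E\tecka f_1)$ and $(E\tecka f_1)\,h_2\,L_E\ome$ in the $\Sha$-component, and $\{f_1,h_2\} - h_2\,\Lam(L_E\ome,df_1)$ in the $E$-component, together with their $1\leftrightarrow 2$ counterparts of opposite sign. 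In this way all the $h_i$-dependent terms of \eqref{Eq: 2.2} are accounted for.

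The principal term $[df_1\Sha, df_2\Sha]$ is where the real work lies. Here I would invoke the standard identity $[df_1\Sha, df_2\Sha] = (d\{f_1,f_2\})\Sha + V$, where the correcting field $V$ acts on functions as the Jacobiator, $V\tecka h = \frac{1}{2}\,[\Lam,\Lam](df_1,df_2,dh)$. Substituting $[\Lam,\Lam] = 2\,E\wed(\Lam\Sha\ten\Lam\Sha)(d\ome)$ and expanding $E\wed(\Lam\Sha\ten\Lam\Sha)(d\ome)$ on the covectors $(df_1,df_2,dh)$ yields three terms proportional to $E\tecka f_1$, $E\tecka f_2$ and $E\tecka h$, the coefficients being the values of $d\ome$ on the fields $df_i\Sha$ and $dh\Sha$. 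The crucial simplification uses $i_\ome\Lam = 0$, which forces $\ome(df\Sha) = 0$, so that Cartan's formula collapses $i_{df\Sha}d\ome$ to $L_{df\Sha}\ome$; rewriting $d\ome(df_2\Sha, dh\Sha) = -\,dh\big((L_{df_2\Sha}\ome)\Sha\big)$ converts the $E\tecka f_1$ and $E\tecka f_2$ terms into the $\Sha$-contributions $(E\tecka f_1)(L_{df_2\Sha}\ome)\Sha$ and $-(E\tecka f_2)(L_{df_1\Sha}\ome)\Sha$, while the $E\tecka h$ term is read off directly as the $E$-component $-\,d\ome(df_1\Sha, df_2\Sha)\,E$.

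Finally I would collect the $\Sha$-components and the $E$-components of all four pieces, whereupon they assemble precisely into \eqref{Eq: 2.2}. The delicate point throughout is the $\Sha/E$ decomposition of the $[\Lam,\Lam]$-correction $V$ and the consistent tracking of the signs coming from the Schouten--Nijenhuis and $\Lam\Sha$ conventions, which must be fixed so that the Lie-derivative terms $(L_{df_i\Sha}\ome)\Sha$ emerge with the stated signs; once the identity $\ome(df\Sha)=0$ and Cartan's formula are in hand, the remaining effort is routine bookkeeping carried out with only $i_E\ome=1$, $i_\ome\Lam=0$, and the two structure equations.
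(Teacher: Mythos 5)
Your proposal is sound and, at bottom, it is the paper's own proof made self-contained. The paper disposes of the lemma in two lines: it quotes the bracket formulas \eqref{Eq: 2.3} and \eqref{Eq: 2.4} from \cite{JanMod09} and leaves the Leibniz expansion of $[df_1\Sha+h_1\,E,\,df_2\Sha+h_2\,E]$ implicit. You carry out that four-term expansion explicitly and, in addition, re-derive \eqref{Eq: 2.3} and \eqref{Eq: 2.4} from the structure equations $[E,\Lam]=-E\wed\Lam\Sha(L_E\ome)$ and $[\Lam,\Lam]=2\,E\wed(\Lam\Sha\ten\Lam\Sha)(d\ome)$ rather than citing them. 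Your derivation of $[df\Sha,E]$ from $L_E(i_{df}\Lam)=i_{d(E\tecka f)}\Lam+i_{df}[E,\Lam]$ reproduces \eqref{Eq: 2.3} exactly; your handling of the cross terms, of $[h_1\,E,h_2\,E]$, and the use of $i_\ome\Lam=0$ with Cartan's formula to identify $i_{df\Sha}d\ome=L_{df\Sha}\ome$ is precisely what converts the $i$-terms of \eqref{Eq: 2.4} into the Lie-derivative form appearing in \eqref{Eq: 2.2}. What your route buys is independence from the external reference; what it costs is direct exposure to the Schouten sign conventions, which the paper sidesteps by citation.

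One sign does need repair. In the convention in which the paper states $[\Lam,\Lam]=2\,E\wed(\Lam\Sha\ten\Lam\Sha)(d\ome)$ (cf.\ \cite{Vai94}), the correction field in $[df_1\Sha,df_2\Sha]=(d\{f_1,f_2\})\Sha+V$ satisfies $V\tecka g=-\tfrac12\,[\Lam,\Lam](df_1,df_2,dg)$, not $+\tfrac12$, since $([X_{f_1},X_{f_2}]-X_{\{f_1,f_2\}})\tecka g$ equals \emph{minus} the cyclic Jacobiator $\{\{f_1,f_2\},g\}+\{\{f_2,g\},f_1\}+\{\{g,f_1\},f_2\}=\tfrac12\,[\Lam,\Lam](df_1,df_2,dg)$. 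You can detect this inside your own computation: with $+\tfrac12$, the term $(E\tecka g)\,d\ome(df_1\Sha,df_2\Sha)$ of $\big(E\wed(\Lam\Sha\ten\Lam\Sha)(d\ome)\big)(df_1,df_2,dg)$ would give the $E$-component $+\,d\ome(df_1\Sha,df_2\Sha)\,E$, the opposite of what \eqref{Eq: 2.2} (and \eqref{Eq: 2.4}) require, and likewise the two $(E\tecka f_i)$-terms would emerge with reversed signs after your rewriting $d\ome(df_2\Sha,dg\Sha)=-(L_{df_2\Sha}\ome)\Sha\tecka g$. Since you explicitly flagged the sign as convention-dependent and every other block of your expansion matches \eqref{Eq: 2.2} term by term, this is a repairable slip rather than a structural gap: replace $+\tfrac12$ by $-\tfrac12$ and the assembly closes exactly as claimed.
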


\begin{proof}
We have (see \cite{JanMod09})
\begin{align}
	[E,df\Sha] \label{Eq: 2.3}
&=
\big( d(E{\tecka}f) - (E{\tecka}f) \, (L_E\ome)\big)\Sha
	+ \Lam(L_E\ome,df) \, E \,,
\\
	[df\Sha, dh\Sha] \label{Eq: 2.4}
&=
	\big(d\Lam(df,dh)
+  (E{\tecka}f) \, (i_{dh\Sha}d\ome)
\\ 
& \quad\nonumber
-  (E{\tecka}h) \, (i_{df\Sha}d\ome)\big)\Sha 
-  d\ome(df\Sha,dh\Sha) \, E \,
\end{align}
which implies \eqref{Eq: 2.2}.
\end{proof}

It is easy to see that the vector field \eqref{Eq: 2.2} is not generally the pre-Hamiltonian lift of a pair of functions. It is so in the case when the projection $p_1: T\f M \to \Ker(\omega)$ of \eqref{Eq: 2.2} is the $\Lambda\Sha$-lift of the differential of a function. We shall describe several examples of subsheafs of $C^\infty(\f M) \times  C^\infty(\f M)$ 
such that the Lie bracket of two pre-Hamiltonian lifts of pairs of functions from the subsheaf is the pre-Hamiltonian lift of a pair from the subsheaf.  All such subsheafs are given by local generators of infinitesimal symmetries of basic tensor fields.

\subsection{Infinitesimal symmetries of $ \omega $ and $\Omega$ generated by pairs of functions}
\label{Sec: omega pais of functions}

\begin{thm}\label{Th: 2.2}
The pre-Hamiltonian lift \eqref{Eq: 2.1} is an infinitesimal symmetry of $ \omega $ if and only if 
\begin{equation}\label{Eq: 2.5}
i_{df\Sha}d\omega + h\, i_E\, d\omega + dh = 0\,.
\end{equation}
\end{thm}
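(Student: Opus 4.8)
The plan is to prove the equivalence by a direct application of Cartan's magic formula
\[
L_{X_{(f,h)}}\omega = i_{X_{(f,h)}}\,d\omega + d\,i_{X_{(f,h)}}\omega,
\]
and then to read off that the right-hand side is precisely the one-form appearing in \eqref{Eq: 2.5}. Since $X_{(f,h)}$ is an infinitesimal symmetry of $\omega$ exactly when $L_{X_{(f,h)}}\omega = 0$, this will give the claim.

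First I would expand the interior-product term by linearity in the vector argument. Using the definition $X_{(f,h)} = df\Sha + h\, E$ we get
\[
i_{X_{(f,h)}}\,d\omega = i_{df\Sha}\,d\omega + h\, i_E\,d\omega.
\]
Next I would evaluate the function $i_{X_{(f,h)}}\omega = \omega(X_{(f,h)})$. Here the key structural facts recalled in the introduction come in: from $i_E\omega = 1$ we have $\omega(h\,E) = h$, while $df\Sha = \Lambda\Sha(df)$ lies in $\im(\Lambda\Sha) = \Ker(\omega)$, so $\omega(df\Sha) = 0$. Hence $\omega(X_{(f,h)}) = h$ and therefore $d\,i_{X_{(f,h)}}\omega = dh$.

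Combining the two computations yields
\[
L_{X_{(f,h)}}\omega = i_{df\Sha}\,d\omega + h\, i_E\,d\omega + dh,
\]
which is exactly the left-hand side of \eqref{Eq: 2.5}. Thus $L_{X_{(f,h)}}\omega$ vanishes if and only if \eqref{Eq: 2.5} holds, proving the theorem. Because everything reduces to one application of Cartan's formula, there is no substantial obstacle; the only point requiring attention is the use of $\Ker(\omega) = \im(\Lambda\Sha)$ to conclude $\omega(df\Sha) = 0$, together with care about the sign conventions encoded in the notation $\Sha$ and $\Fla$.
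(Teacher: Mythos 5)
Your proof is correct and is essentially the paper's own argument: the paper likewise applies Cartan's formula together with $i_E\omega = 1$ and $i_{df\Sha}\omega = 0$ to obtain $L_{X_{(f,h)}}\omega = i_{df\Sha}d\omega + h\, i_E d\omega + dh$, from which the equivalence is immediate. Your write-up merely makes explicit the intermediate steps (in particular the use of $\im(\Lambda\Sha) = \Ker(\omega)$) that the paper leaves implicit.
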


\begin{proof}
From $ i_E \omega = 1 $ and $ i_{df\Sha} \omega = 0 $ we get
$$  
L_{X_{(f,h)}} \omega = i_{df\Sha}d\omega + h\, i_E\, d\omega + dh
$$
which proves Theorem \ref{Th: 2.2}.
\end{proof}

Let us denote by $\LGen(\omega)\subset C^\infty(\f M) \times  C^\infty(\f M)$ the subsheaf of pairs of functions $(f,h)$
on $\f M$ which generate (locally) infinitesimal symmetries of $\omega$, i.e. satisfy the condition \eqref{Eq: 2.5}.

\begin{thm}\label{Th: 2.3}
The Lie bracket of the pre-Hamiltonian lifts of two pairs $(f_i,h_i) \in \LGen(\omega)$, $i= 1,2,$ is the pre-Hamiltonian lift of a pair of functions from $\LGen(\omega)$.
\end{thm}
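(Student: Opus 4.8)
The plan is to chain two facts: the general principle that infinitesimal symmetries of a fixed tensor field are closed under the Lie bracket of vector fields, together with the explicit bracket formula \eqref{Eq: 2.2} of Lemma~\ref{Lm: 2.1}. Since $(f_i,h_i)\in\LGen(\omega)$ means $L_{X_{(f_i,h_i)}}\omega = 0$ by Theorem~\ref{Th: 2.2}, the identity $L_{[X,Y]} = L_X L_Y - L_Y L_X$ gives $L_{[X_{(f_1,h_1)},X_{(f_2,h_2)}]}\omega = 0$ for free. So the bracket is already an infinitesimal symmetry of $\omega$; what remains is to show it is moreover a pre-Hamiltonian lift, i.e.\ that its $\im(\Lam\Sha)$-component is $\Lam\Sha$ applied to an \emph{exact} $1$-form. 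Once that is established, Theorem~\ref{Th: 2.2} forces the resulting pair to satisfy \eqref{Eq: 2.5}, hence to lie in $\LGen(\omega)$, and the proof is complete.

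First I would recast the defining condition \eqref{Eq: 2.5}. Because $df\Sha\in\Ker(\omega)$ we have $i_{df\Sha}\omega = 0$, and $i_E\omega = 1$, so the Cartan formula gives $i_{df\Sha}d\omega = L_{df\Sha}\omega$ and $i_E d\omega = L_E\omega$. Thus $(f,h)\in\LGen(\omega)$ is equivalent to the compact relation
\begin{equation*}
(L_{df\Sha} + h\,L_E)\,\omega = -\,dh\,.
\end{equation*}
This is the key simplification, and it is precisely the combination appearing in the first two lines of \eqref{Eq: 2.2}.

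Next I would substitute this relation into the $\Sha$-part of \eqref{Eq: 2.2}, using $(L_{df_i\Sha}+h_i L_E)\omega = -\,dh_i$ for $i=1,2$. The horizontal $1$-form then collapses to
\begin{equation*}
d\{f_1,f_2\} - (E\tecka f_1)\,dh_2 + (E\tecka f_2)\,dh_1 - h_2\,d(E\tecka f_1) + h_1\,d(E\tecka f_2)\,,
\end{equation*}
and the paired terms recombine by the Leibniz rule into $-\,d\bigl(h_2\,(E\tecka f_1)\bigr) + d\bigl(h_1\,(E\tecka f_2)\bigr)$. Hence the whole $1$-form equals $df_3$ with
\begin{equation*}
f_3 \byd \{f_1,f_2\} - h_2\,(E\tecka f_1) + h_1\,(E\tecka f_2)\,,
\end{equation*}
so the $\Sha$-part of the bracket is exactly $df_3\Sha$. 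Reading off the coefficient of $E$ in \eqref{Eq: 2.2} gives the second component $h_3$, and therefore $[X_{(f_1,h_1)},X_{(f_2,h_2)}] = X_{(f_3,h_3)}$ is a pre-Hamiltonian lift.

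Finally I would combine the two observations: the bracket is a symmetry of $\omega$ and equals $X_{(f_3,h_3)}$, so Theorem~\ref{Th: 2.2} yields $(f_3,h_3)\in\LGen(\omega)$. The only real obstacle is verifying that the $L_{df\Sha}\omega$ and $L_E\omega$ terms in \eqref{Eq: 2.2} are exactly absorbed by the $\LGen(\omega)$ conditions, so that the horizontal part becomes \emph{exact} rather than merely closed; the reformulation $(L_{df\Sha}+h\,L_E)\omega = -\,dh$ is what makes this transparent.
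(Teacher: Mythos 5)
Your proposal is correct and follows essentially the same route as the paper: the reformulation of \eqref{Eq: 2.5} as $(L_{df\Sha}+h\,L_E)\omega=-dh$, the Leibniz-rule recombination showing the horizontal part is exact with $f_3=\{f_1,f_2\}-h_2\,(E\tecka f_1)+h_1\,(E\tecka f_2)$, and the closing step via $L_{[X,Y]}=L_XL_Y-L_YL_X$ together with Theorem \ref{Th: 2.2} are precisely the paper's argument. The only cosmetic difference is that the paper also simplifies the $E$-coefficient using $E\tecka h_i+\Lambda(L_E\omega,df_i)=0$ (obtained by evaluating \eqref{Eq: 2.5} on $E$) to arrive at the explicit bracket \eqref{Eq: 2.8}, a simplification your argument correctly identifies as unnecessary for the theorem itself.
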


\begin{proof}
If \eqref{Eq: 2.5} is satisfied then $ dh_i = - (i_{df_i\Sha} + h_i\,i_E)\, d\omega = - (L_{df_i\Sha} + h_i \, L_E)\omega $ and  $E{\tecka}h_i + \Lambda(L_E\omega,df_i) = 0$ which follows by evaluating \eqref{Eq: 2.5} on $E$. Then we can rewrite \eqref{Eq: 2.2} as
\begin{align}\label{Eq: 2.6}
 [X_{(f_1,h_1)}, &  X_{(f_2,h_2)}]
  = 
\big(
d(\{f_1,f_2\} - h_2\,(E{\tecka}f_1)
+ h_1\, (E{\tecka}f_2))
\big)\Sha
\\
& \quad\nonumber
+ \big(
 \{f_1,h_2\} - \{f_2,h_1\} - d\omega(df_1\Sha,df_2\Sha) 
\big)\, E
\,
\end{align}
which is the pre-Hamiltonian lift of the pair
\begin{align}\label{Eq: 2.7}
\big(
\{f_1,f_2\} - h_2\,(E{\tecka}f_1)
+
&
 h_1\, (E{\tecka}f_2);
\\
&
 \{f_1,h_2\} - \{f_2,h_1\} - d\omega(df_1\Sha,df_2\Sha) 
\big)\,. \nonumber
\end{align}
The pair \eqref{Eq: 2.7} is in $\LGen(\omega)$ which follows from the fact that, according to Theorem \ref{Th: 2.2}, the pre-Hamiltonian lifts \eqref{Eq: 2.1} of pairs from $\LGen(\omega)$ are infinitesimal symmetries of $\omega$. Then from $L_{[X,Y]} = L_X L_Y - L_YL_X$ the Lie bracket \eqref{Eq: 2.6} of two pre-Hamiltonian lifts of pairs from $\LGen(\omega)$ is an infinitesimal symmetry of $\omega$ and, by Theorem \ref{Th: 2.2}, the pair \eqref{Eq: 2.7} has to satisfy the condition \eqref{Eq: 2.5}, i.e. it is in $\LGen(\omega)$.
\end{proof}

As a consequence we obtain the Lie bracket
\begin{align}\label{Eq: 2.8}
\db[(f_{1},h_{1}) 
;(f_{2},h_{2})\db]
 = &
\big(
\{f_1,f_2\} - h_2\,(E{\tecka}f_1)
+
 h_1\, (E{\tecka}f_2);
\\
&
 \{f_1,h_2\} - \{f_2,h_1\} - d\omega(df_1\Sha,df_2\Sha) 
\big)\,\nonumber 
 \end{align}
 which defines the local Lie algebra structure on $\LGen(\omega)$.
Moreover, the pre-Hamiltonian lift \eqref{Eq: 2.1} is the Lie algebra homomorphism from the local Lie algebra $ (\LGen(\omega); \db[,\db]) $ to the Lie algebra $(\mathcal{L}(\omega); [,])\subset (\mathcal{X}(\f M); [,])$ of infinitesimal symmetries of $\omega$.

\smallskip

Now, we shall define a Lie algebra structure on the subsheaf 
$ C^\infty_E(\f M) \times  C^\infty(\f M) $
of pairs $(f,h)$ of functions, where $f$ is conserved.

\begin{thm}\label{Th: 2.4}
The Lie bracket of the pre-Hamiltonian lifts of two pairs $(f_i,h_i) \in C^\infty_E(\f M) \times  C^\infty(\f M)$, $i= 1,2,$ is the pre-Hamiltonian lift of a pair of functions from $C^\infty_E(\f M) \times  C^\infty(\f M)$.
\end{thm}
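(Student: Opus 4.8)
The plan is to specialize the bracket formula \eqref{Eq: 2.2} of Lemma \ref{Lm: 2.1} to the present situation, in which both $f_1$ and $f_2$ are conserved, i.e. $E\tecka f_1 = E\tecka f_2 = 0$. Once these factors are set to zero, every term of the $\Sha$-component of \eqref{Eq: 2.2} that carries a factor $E\tecka f_i$ or $d(E\tecka f_i)$ drops out, and the $\Sha$-component collapses to $\big(d\{f_1,f_2\}\big)\Sha$. Hence, with no further condition imposed, the bracket $[X_{(f_1,h_1)},X_{(f_2,h_2)}]$ is the pre-Hamiltonian lift \eqref{Eq: 2.1} of the pair whose first component is $\{f_1,f_2\}$ and whose second component $H$ is the $E$-coefficient read off from \eqref{Eq: 2.2}, namely
\begin{align*}
H &= \{f_1,h_2\} - \{f_2,h_1\} - d\ome(df_1\Sha,df_2\Sha) \\
&\quad + h_1\,(E\tecka h_2 + \Lam(L_E\ome,df_2)) - h_2\,(E\tecka h_1 + \Lam(L_E\ome,df_1)).
\end{align*}

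The only remaining point is to check that this pair again lies in $C^\infty_E(\f M)\times C^\infty(\f M)$. The second component $H$ is manufactured from $f_i,h_i$ by smooth operations, so $H\in C^\infty(\f M)$ automatically; the content of the claim is therefore that the first component is conserved, $E\tecka\{f_1,f_2\}=0$. To verify this I would apply the Leibniz rule for $L_E$ to $\{f_1,f_2\} = \Lam(df_1,df_2)$, using $L_E\,df_i = d(E\tecka f_i)$, to obtain
\[
E\tecka\{f_1,f_2\} = [E,\Lam](df_1,df_2) + \Lam(d(E\tecka f_1),df_2) + \Lam(df_1,d(E\tecka f_2)).
\]
The last two terms vanish because $E\tecka f_i = 0$, and the first is evaluated through the structure equation $[E,\Lam] = -\,E\wed\Lam\Sha(L_E\ome)$: expanding $\big(E\wed(L_E\ome)\Sha\big)(df_1,df_2)$ leaves only terms proportional to $df_1(E)=E\tecka f_1$ and $df_2(E)=E\tecka f_2$, both of which are zero. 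Thus $E\tecka\{f_1,f_2\}=0$, so $(\{f_1,f_2\};H)\in C^\infty_E(\f M)\times C^\infty(\f M)$, which is exactly the assertion.

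The computation is short and I expect no serious obstacle. The one step that deserves care is precisely the conservation check, because $(E,\Lam)$ is only an almost-coPoisson-Jacobi pair and so $L_E$ is \emph{not} a derivation of the bracket $\{,\}$ in general; the defect is exactly the correction term $[E,\Lam](df_1,df_2)$. What makes the argument go through is that this correction, together with every surviving $\Sha$-term in \eqref{Eq: 2.2}, involves a contraction of some $df_i$ with $E$, which is $E\tecka f_i$ and hence vanishes on conserved functions.
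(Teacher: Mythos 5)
Your proposal is correct and follows essentially the same route as the paper: specialize formula \eqref{Eq: 2.2} to $E\tecka f_i=0$, read off the resulting pair, and verify $E\tecka\{f_1,f_2\}=0$ via the Leibniz rule together with the structure identity $[E,\Lam]=-E\wed\Lam\Sha(L_E\ome)$. The paper writes this conservation check as $E\tecka\{f_{1},f_{2}\}=\{E\tecka f_{1},f_{2}\}+\{f_{1},E\tecka f_{2}\}+i_{[E,\Lambda]}(df_{1}\wedge df_{2})=-i_{E\wedge(L_E\omega)\Sha}(df_{1}\wedge df_{2})=0$, which is exactly your computation, including your (correct) observation that the expansion of the wedge term leaves only multiples of $df_i(E)$.
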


\begin{proof}
For $E{\tecka}f_i = 0$ the 
vector field \eqref{Eq: 2.2} has the expression
\begin{align*}
[X_{(f_1,h_1)} &, X_{(f_2,h_2)}]
 =
\big(
d\{f_1,f_2\} 
\big)\Sha
\\
& \nonumber
+ \big(
 \{f_1,h_2\} - \{f_2,h_1\} - d\omega(df_1\Sha,df_2\Sha) 
\\
&  \nonumber
 + h_1\, (E{\tecka}h_2 + \Lambda(L_E\omega,df_2)) 
 - h_2\, (E{\tecka}h_1 + \Lambda(L_E\omega,df_1 ))
\big)\, E\,.
\end{align*} 
This vector field is the pre-Hamiltonian lift \eqref{Eq: 2.1} of the pair
\begin{multline*}
\big(
\{f_1,f_2\} ; 
 \{f_1,h_2\} - \{f_2,h_1\} - d\omega(df_1\Sha,df_2\Sha) 
\\
 + h_1\, (E{\tecka}h_2 + \Lambda(L_E\omega,df_2)) 
 - h_2\, (E{\tecka}h_1 + \Lambda(L_E\omega,df_1 ))
\big)\,.
\end{multline*}
Moreover, the above pair is  in $C^\infty_E(\f M) \times  C^\infty(\f M)$. Really,
\begin{align*}
E{\tecka}\{f_{1},f_{2}\}
& = 
\{E{\tecka}f_{1},f_{2}\} + \{f_{1},E{\tecka}f_{2}\} + i_{[E,\Lambda]} (df_{1} \wedge df_{2}) 
\\
& = \nonumber
-i_{E\wedge(L_E\omega)\Sha}(df_1 \wedge df_2) = 0\,
\end{align*}
which proves Theorem \ref{Th: 2.4}.
\end{proof}

As a consequence, by observing $\Lambda(L_E\omega,df) = (L_E\omega)\Sha {\tecka} f$, we obtained  the Lie bracket
\begin{align}\nonumber
\db[(f_{1},h_{1}) 
&
;(f_{2},h_{2})\db]
 =
\big(
\{f_{1},f_{2}\};
\\ 
& \label{Eq: 2.9}
\{f_{1},h_{2}\} - \{f_{2},h_{1}\} 
- d\omega(df_{1}\Sha,df_{2}\Sha)
\\
& \nonumber
+ h_1\, (E{\tecka}h_2 + (L_E\omega)\Sha{\tecka}f_2 ))
- h_2\, (E{\tecka}h_1 + (L_E\omega)\Sha{\tecka}f_1 ))
\big)\,
\end{align}
of pairs of functions from $ C_E^\infty(\f M)\times  C^\infty(\f M)$ which defines the local Lie algebra structure on $C^\infty_E(\f M) \times  C^\infty(\f M)$.

\smallskip

In \cite{Jan16} it was proved that all infinitesimal symmetries of $ \Omega
 $ are vector fields $ X_{(\alpha,h)} = \alpha\Sha + h\, E $, where $ \alpha $ is a closed 1-form such that $ \alpha(E) = 0 $. Hence, locally, $ \alpha = df $ for a function $ f \in C^{\infty}_E(\f M) $
 and any infinitesimal symmetry of $\Omega$ is locally the pre-Hamiltonian lift of a pair of functions from $C^\infty_E(\f M) \times  C^\infty(\f M)$\,.
So the Lie algebra of local generators of infinitesimal symmetries of $\Omega$ is $(\LGen(\Omega),\db[,\db]) \equiv (C^\infty_E(\f M) \times  C^\infty(\f M),\db[,\db])$ with the Lie bracket \eqref{Eq: 2.9}. 
The pre-Hamiltonian lift \eqref{Eq: 2.1} is the Lie algebra homomorphism from the local Lie algebra $ (\LGen(\Omega); \db[,\db]) $ to the Lie algebra $(\mathcal{L}(\Omega); [,])\subset (\mathcal{X}(\f M); [,])$ of infinitesimal symmetries of $\Omega$.

\begin{thm}\label{Th: 2.5}
A vector field $X_{(f,h)}$ is an infinitesimal symmetry of the almost-cosymplectic-contact structure $(\ome,\Ome)$
if and only if
 $ f \in C^\infty_E(\f M) $  and the condition \eqref{Eq: 2.5} is satisfied.
\end{thm}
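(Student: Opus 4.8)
The plan is to recognize that an infinitesimal symmetry of the almost-cosymplectic-contact structure $(\ome,\Ome)$ means $L_{X_{(f,h)}}\ome = 0$ \emph{and} $L_{X_{(f,h)}}\Ome = 0$ simultaneously, so the task is to show that the conjunction of these two conditions is equivalent to the pair of conditions stated: $f\in C^\infty_E(\f M)$ together with \eqref{Eq: 2.5}. By Theorem \ref{Th: 2.2} the condition $L_{X_{(f,h)}}\ome=0$ is exactly \eqref{Eq: 2.5}, so that half is already in hand; what remains is to relate $L_{X_{(f,h)}}\Ome=0$ to the conservation condition $E\tecka f=0$, \emph{given} that \eqref{Eq: 2.5} holds.

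First I would compute $L_{X_{(f,h)}}\Ome$ directly. Since $\Ome$ is closed ($d\Ome=0$), Cartan's formula gives $L_{X_{(f,h)}}\Ome = d\,i_{X_{(f,h)}}\Ome$, and because $X_{(f,h)} = df\Sha + h\,E$ with $i_E\Ome=0$, only the $df\Sha$ part contributes: $i_{X_{(f,h)}}\Ome = i_{df\Sha}\Ome = (df\Sha)\Fla$. Using the duality relations $\Ome\Fla\com\Lam\Sha = q_1$ (the projection onto $\im(\Ome\Fla)=\Ker(E)$) from the splitting discussion, one gets $(df\Sha)\Fla = df - (E\tecka f)\,\ome$. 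Hence $L_{X_{(f,h)}}\Ome = d\big(df - (E\tecka f)\,\ome\big) = -\,d\big((E\tecka f)\,\ome\big)$, so that $L_{X_{(f,h)}}\Ome = 0$ if and only if $(E\tecka f)\,\ome$ is closed.

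The cleanest route from here is to use the already-established results of Section~2 rather than analyze $d\big((E\tecka f)\,\ome\big)=0$ by hand. I would argue as follows. By Theorem \ref{Th: 2.3} the pairs satisfying \eqref{Eq: 2.5} form the subsheaf $\LGen(\ome)$, and the quoted result from \cite{Jan16} identifies the generators of symmetries of $\Ome$ as precisely $\LGen(\Ome)\equiv C^\infty_E(\f M)\times C^\infty(\f M)$, i.e. the condition $L_{X_{(f,h)}}\Ome=0$ forces $f\in C^\infty_E(\f M)$ and, conversely, every $f\in C^\infty_E(\f M)$ yields a symmetry of $\Ome$. Therefore a vector field $X_{(f,h)}$ is a symmetry of both $\ome$ and $\Ome$ if and only if it lies in $\LGen(\ome)\cap\LGen(\Ome)$, which is exactly the set of pairs with $f\in C^\infty_E(\f M)$ satisfying \eqref{Eq: 2.5}. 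For the direct computation I would simply note that when $E\tecka f=0$ the term $(E\tecka f)\,\ome$ vanishes identically, so $L_{X_{(f,h)}}\Ome=0$ is automatic once $f$ is conserved, and $L_{X_{(f,h)}}\ome=0$ reduces to \eqref{Eq: 2.5}.

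The main obstacle is the reverse implication: showing that $L_{X_{(f,h)}}\Ome=0$ actually forces $E\tecka f=0$ rather than merely $d\big((E\tecka f)\ome\big)=0$. Expanding the latter gives $d(E\tecka f)\wed\ome + (E\tecka f)\,d\ome = 0$; I would contract this with $E$ and use $i_E\ome=1$, $i_E\,d\ome = L_E\ome$ (since $i_E\Ome = i_E d\ome=0$ would need care, but here $\Ome$ and $d\ome$ are distinct as stressed in Remark \ref{Rm: 1.2}) to isolate $E\tecka f$. The subtlety is handling the interplay between $d\ome$ and $\Ome$ correctly; the safest presentation is to invoke the \cite{Jan16} characterization of $\LGen(\Ome)$ as a black box, which already encodes that a symmetry of $\Ome$ has conserved generator, and thereby avoid the delicate contraction argument entirely. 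Combining this with Theorem \ref{Th: 2.2} then yields the stated equivalence.
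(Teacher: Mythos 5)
Your proposal is correct and takes essentially the same route as the paper: the paper's one-line proof likewise just combines Theorem \ref{Th: 2.2} (the $\omega$-half) with the preceding quoted result of \cite{Jan16} identifying $\LGen(\Omega)$ with $C^\infty_E(\f M)\times C^\infty(\f M)$, which is exactly your black-box step. Your supplementary Cartan-formula computation $L_{X_{(f,h)}}\Omega=-d\big((E\tecka f)\,\omega\big)$ is a sound self-contained verification of the easy direction, and deferring the necessity of $E\tecka f=0$ to \cite{Jan16} rather than attempting the contraction argument was the right call, since $d\big((E\tecka f)\,\omega\big)=0$ alone does not pointwise force $E\tecka f=0$ (e.g.\ in the cosymplectic case it only forces $d(E\tecka f)\wed\omega=0$), and the paper handles this exactly as you do.
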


\begin{proof}
It follows from Theorems \ref{Th: 2.2} and \ref{Th: 2.4}.
\end{proof}

\begin{lem}\label{Lm: 2.6}
A vector field $X_{(f,h)}$ is an infinitesimal symmetry of $(\omega,\Omega)$ if and only if
the following conditions are satisfied 
\begin{enumerate}
\item  $E\tecka f = i_E df = 0$\,, 
\item  $i_E dh + i_E i_{df\Sha}d\omega
= E{\tecka}h + (L_E\ome)\Sha{\tecka} f = 0$\,,
\item $ d\omega(df\Sha,\beta\Sha) + h \, d\omega(E,\beta\Sha) + dh(\beta\Sha) = 0$ for any 1-form $ \beta $, especially, if we put $\beta = dg$ for a $g\in C^\infty(\f M)$, we get
\\
$\{g,h\} = d\omega(dg\Sha,df\Sha) + h \, d\omega(dg\Sha,E)$\,.
\end{enumerate}
\end{lem}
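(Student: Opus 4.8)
The plan is to expand the single tensor equation of Theorem~\ref{Th: 2.5} — namely condition \eqref{Eq: 2.5}, together with $f\in C^\infty_E(\f M)$ — and read off its components along the splitting $T^*\f M = \im(\Omega\Fla)\oplus\langle\omega\rangle$ recorded in \eqref{Eq: 1.7}. By Theorem~\ref{Th: 2.5}, $X_{(f,h)}$ is an infinitesimal symmetry of $(\ome,\Ome)$ exactly when
\begin{equation*}
E\tecka f = 0 \qquad\text{and}\qquad i_{df\Sha}d\ome + h\, i_E\, d\ome + dh = 0\,.
\end{equation*}
The first of these is precisely condition~(1), so the whole content of the lemma is to show that the single $1$-form equation \eqref{Eq: 2.5} is equivalent to the pair of scalar conditions (2) and (3). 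First I would observe that a $1$-form vanishes if and only if it annihilates the whole tangent bundle, and by the splitting $T\f M = \im(\Lam\Sha)\oplus\langle E\rangle$ it suffices to test \eqref{Eq: 2.5} separately on $E$ and on an arbitrary vector of the form $\beta\Sha$.

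For condition~(2) I would contract \eqref{Eq: 2.5} with $E$. Using $i_E i_E d\ome = 0$ (antisymmetry of $d\ome$) the middle term drops out, leaving $i_E i_{df\Sha}d\ome + i_E dh = 0$, i.e. $E\tecka h + i_E i_{df\Sha}d\ome = 0$; the identification $i_E i_{df\Sha}d\ome = (L_E\ome)\Sha\tecka f$ then gives the stated form. Here I would use the interpretation of the distinguished form $L_E\ome$ from Remark~\ref{Rm: 1.2} together with the relation $\Lam(L_E\ome,df)=(L_E\ome)\Sha\tecka f$ already invoked just after Theorem~\ref{Th: 2.4}, so this rewriting is not a new computation but an application of facts established above. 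For condition~(3) I would contract \eqref{Eq: 2.5} with an arbitrary $1$-form $\beta$ (viewed via $\beta\Sha$), which immediately yields
\begin{equation*}
d\ome(df\Sha,\beta\Sha) + h\, d\ome(E,\beta\Sha) + dh(\beta\Sha) = 0\,,
\end{equation*}
the first displayed equation of~(3). The specialization $\beta = dg$ then follows by rearranging and using $dh(dg\Sha) = \Lam(dg,dh) = \{g,h\}$ (with the sign bookkeeping matching the convention $\{g,h\}=\Lam(dg,dh)$), together with the skew-symmetry needed to rewrite $d\ome(df\Sha,dg\Sha)$ as $d\ome(dg\Sha,df\Sha)$.

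Finally I would argue the converse: conditions (2) and (3) together force \eqref{Eq: 2.5}. Since the left-hand side of \eqref{Eq: 2.5} is a $1$-form, it is zero iff it is annihilated by every element of a spanning set of $T\f M$; the splitting gives $E$ together with all $\beta\Sha$, $\beta\in\Ome^1(\f M)$, as such a set because $\im(\Lam\Sha)=\Ker(\ome)$ and adding $E$ spans $T\f M$. Condition~(2) gives the $E$-component and condition~(3) gives every $\beta\Sha$-component, so the $1$-form vanishes identically. Combined with condition~(1)$=\{E\tecka f=0\}$, this is exactly the criterion of Theorem~\ref{Th: 2.5}. I expect the main obstacle to be purely bookkeeping: getting the $i_E$-contractions and the $\Sha$/$\Fla$ identifications to land on the precise stated forms, in particular verifying $i_E i_{df\Sha}d\ome=(L_E\ome)\Sha\tecka f$ and matching signs in the Poisson-bracket rewriting of~(3); there is no genuine analytic difficulty beyond Theorem~\ref{Th: 2.5} and the nondegeneracy of the splitting.
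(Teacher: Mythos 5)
Your proposal is correct and follows essentially the same route as the paper: the published proof likewise combines Theorems~\ref{Th: 2.2} and~\ref{Th: 2.5} and evaluates the $1$-form on the left-hand side of \eqref{Eq: 2.5} on $E$ (yielding condition~(2)) and on $\beta\Sha$ for an arbitrary $1$-form $\beta$ (yielding condition~(3)). You merely spell out what the paper leaves implicit — the spanning argument via $T\f M=\im(\Lam\Sha)\oplus\langle E\rangle$ for the converse and the identification $i_E i_{df\Sha}d\ome=\Lam(L_E\ome,df)=(L_E\ome)\Sha\tecka f$, which indeed holds since $L_E\ome=i_E d\ome$ by $i_E\ome=1$.
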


\begin{proof}
It is a consequence of  Theorem \ref{Th: 2.5}  and Theorem \ref{Th: 2.2} where 
we have evaluated the 1-form on the left hand side of \eqref{Eq: 2.5} on $ E $ (which gives the condition (2)) and on $ \beta\Sha $ for any 1-form $ \beta $ (which gives the condition (3)).
\end{proof}

We denote the sheaf of pairs of functions which locally generate infinitesimal symmetries of  the almost-cosymplectic-contact structure $(\ome,\Ome)$
as $ \LGen(\omega,\Omega) = \LGen(\omega) \cap \LGen(\Omega)  $.
The brackets \eqref{Eq: 2.8} and \eqref{Eq: 2.9} restricted for generators of infinitesimal symmetries of $(\omega,\Omega)$ give the equivalet expressions of the bracket
\begin{align}\nonumber
\db[(f_{1},h_{1})  & ;(f_{2},h_{2})\db] =
\\ \label{Eq: 2.10}
& 
 =
\big(
\{f_{1},f_{2}\};\{f_{1},h_{2}\} - \{f_{2},h_{1}\} 
- d\omega(df_{1}\Sha,df_{2}\Sha)
\big)
\\ 
& \nonumber
 =
\big(
\{f_{1},f_{2}\};
d\omega(df_{1}\Sha,df_{2}\Sha)
+ h_2 \, (L_E\omega)\Sha{\tecka}f_{1} 
- h_1 \, (L_E\omega)\Sha{\tecka}f_{2} 
\big)
\\ 
& \nonumber
 =
\big(
\{f_{1},f_{2}\};
d\omega(df_{1}\Sha,df_{2}\Sha)
+ h_1 \, E{\tecka}h_{2} 
- h_2 \, E{\tecka}h_{1} 
\big)\,
\end{align}
which defines the Lie algebra structure on $\LGen(\omega,\Omega)$.

\begin{crl}\label{Cr: 2.7}
An infinitesimal symmetry
of the cosymplectic  structure $(\omega,\Omega)$ is of local type
$
X_{(f,h)} = df\Sha + h\, E\,,
$
where $ f \in C^\infty_E(\f M) $ and $h$ is a constant. 

Then the bracket \eqref{Eq: 2.10} is reduced to
\begin{equation*}
\db[(f_{1},h_{1});(f_{2},h_{2})\db]
=
\big(
\{f_{1},f_{2}\}, 0
\big)\,.
\end{equation*}
I.e. we obtain the subalgebra $ (\LGen_{cos}(\omega,\Omega);\db[,\db]) = (C^\infty_E(\f M);\{,\}) \oplus (\B R;[,]) $ of local generators of infinitesimal symmetries of the cosymplectic structure.
Here $[,]$  is the trivial Lie bracket in $\B R$.   
\end{crl}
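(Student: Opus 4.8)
The plan is to derive this as a direct specialization of the almost-cosymplectic-contact machinery already built, using the defining property of the cosymplectic case from Remark~\ref{Rm: 1.1}, namely $d\omega = 0$. First I would characterize the infinitesimal symmetries. By Theorem~\ref{Th: 2.5}, a vector field $X_{(f,h)}$ is an infinitesimal symmetry of $(\omega,\Omega)$ precisely when $f \in C^\infty_E(\f M)$ and the condition \eqref{Eq: 2.5} holds. In the cosymplectic situation $d\omega = 0$, so $\eqref{Eq: 2.5}$ collapses to $dh = 0$, which on a connected manifold means $h$ is (locally) constant. This immediately establishes the claimed local form of the symmetry: $f$ conserved and $h$ constant.

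Next I would compute the bracket. The cleanest route is to start from the general bracket \eqref{Eq: 2.10} valid on $\LGen(\omega,\Omega)$ and impose $d\omega = 0$ together with $h_i$ constant. The first slot $\{f_1,f_2\}$ is unaffected. For the second slot I would use the first of the three equivalent expressions in \eqref{Eq: 2.10}, namely $\{f_1,h_2\} - \{f_2,h_1\} - d\omega(df_1\Sha,df_2\Sha)$: since $d\omega = 0$ the last term vanishes, and since each $h_i$ is constant, $\{f_1,h_2\} = \Lambda(df_1,dh_2) = 0$ and likewise $\{f_2,h_1\} = 0$, so the entire second component is zero. This yields $\db[(f_1,h_1);(f_2,h_2)\db] = (\{f_1,f_2\},0)$, as asserted. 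As a cross-check one can equally use the third expression in \eqref{Eq: 2.10}, where $h_1\,E{\tecka}h_2 - h_2\,E{\tecka}h_1 = 0$ because constants are conserved and $d\omega(df_1\Sha,df_2\Sha)=0$; the agreement of the two computations is a useful sanity test.

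Finally, for the algebra structure itself, I would observe that the bracket $(\{f_1,f_2\},0)$ shows the second factor is central and acts trivially, while the first factor carries the Poisson bracket $\{,\}$ on $C^\infty_E(\f M)$, which is a genuine Lie bracket there by the computation in the introduction showing $E{\tecka}\{f,h\} = 0$ when $f,h$ are conserved. Since $h$ ranges over (locally) constant functions, i.e.\ over $\B R$, and the bracket of constants is zero, the $h$-component is the trivial Lie algebra $(\B R;[,])$. Hence $\LGen_{cos}(\omega,\Omega)$ splits as the direct sum $(C^\infty_E(\f M);\{,\}) \oplus (\B R;[,])$, establishing the corollary.

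I expect no serious obstacle here, since the result is essentially bookkeeping applied to already-established formulas; the only point requiring a little care is the justification that $h$ constant is the correct reading of $dh = 0$ (local versus global, and the passage from the sheaf-theoretic ``locally generate'' statement to pointwise constancy), and the verification that the two equivalent bracket expressions genuinely collapse to the same thing under $d\omega = 0$, which serves to confirm no hidden term survives.
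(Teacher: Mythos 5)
Your proposal is correct and follows essentially the same route as the paper: the paper's (very terse) proof likewise specializes condition \eqref{Eq: 2.5} to the cosymplectic case $d\omega = 0$ to conclude $dh = 0$, with the reduction of the bracket \eqref{Eq: 2.10} then being immediate since the $d\omega$ term and the Poisson brackets with constant $h_i$ all vanish. Your additional cross-check via the third expression in \eqref{Eq: 2.10} and the remark on local versus global constancy of $h$ are sound elaborations of the same argument, not a different method.
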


\begin{proof}
It follows from $d\omega = 0$, then,  from \eqref{Eq: 2.5}, $dh = 0$.
\end{proof}

\begin{crl}\label{Cr: 2.8}
Any infinitesimal symmetry
of the contact  structure $(\omega,\Omega= d\omega)$ is
 of local type
\begin{equation*}
X_{(f,-f)} =  df\Sha - f\,E\,,
\end{equation*} 
where $ f \in C_E^\infty(\f M) $.

Then the bracket \eqref{Eq: 2.10} is reduced to
\begin{align*}
\db[(f_{1},-f_{1});(f_{2},-f_{2})\db]
& =
\big(
\{f_{1},f_{2}\}, - \{f_{1},f_{2}\}
\big)
\,.
\end{align*}
I.e. we obtain the subalgebra $ (\LGen_{con}(\omega);\db[,\db]) \subset (\LGen(\omega,\Omega);\db[,\db]) $ of local generators of infinitesimal symmetries of the contact structure. 
Moreover,
$
(\LGen_{con}(\omega); \db[ ; \db]) \equiv  (C^\infty_E(\f M), \{,\})\,. 
$
\end{crl}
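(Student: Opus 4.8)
The plan is to specialize Theorem \ref{Th: 2.5} to the contact case $\Omega = d\omega$ and then to read the bracket off \eqref{Eq: 2.10}. The two simplifications the contact condition provides are that, $E$ being the Reeb field, $i_E\, d\omega = i_E\Omega = 0$, whence also $L_E\omega = i_E\, d\omega + d(i_E\omega) = 0$. By Theorem \ref{Th: 2.5} a symmetry of $(\omega,\Omega)$ is a lift $X_{(f,h)}$ with $f\in C^\infty_E(\f M)$ satisfying \eqref{Eq: 2.5}; since $i_E\, d\omega = 0$, that condition collapses to $i_{df\Sha}\, d\omega + dh = 0$.

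First I would evaluate $i_{df\Sha}\, d\omega$ for conserved $f$. Because $\Omega\Fla \com \Lambda\Sha = q_1$ and $q_1(df) = df - (E\tecka f)\,\omega = df$ when $E\tecka f = 0$, we obtain $i_{df\Sha}\, d\omega = \Omega\Fla(df\Sha) = df$. Hence \eqref{Eq: 2.5} reads $d(f+h) = 0$, so $h = -f + c$ for a local constant $c$. The constant is absorbed by setting $g := f - c$: then $g\in C^\infty_E(\f M)$, $dg = df$, and $X_{(f,-f+c)} = dg\Sha - g\,E = X_{(g,-g)}$. Thus every symmetry of the contact pair is locally of the stated form $X_{(f,-f)}$ with $f\in C^\infty_E(\f M)$.

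For the bracket I would substitute $h_i = -f_i$ into \eqref{Eq: 2.10}. The third expression there is the cheapest: since $E\tecka h_i = -E\tecka f_i = 0$, the cross terms $h_1\,E\tecka h_2$ and $h_2\,E\tecka h_1$ drop out, leaving the second slot equal to $d\omega(df_1\Sha,df_2\Sha)$. Evaluating once more with $i_{df_1\Sha}\, d\omega = df_1$ gives $d\omega(df_1\Sha,df_2\Sha) = df_1(df_2\Sha) = \Lambda(df_2,df_1) = -\{f_1,f_2\}$, so $\db[(f_1,-f_1);(f_2,-f_2)\db] = (\{f_1,f_2\}, -\{f_1,f_2\})$, as claimed (the first expression in \eqref{Eq: 2.10} yields the same value). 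This output is again of the normal form $(g,-g)$ with $g = \{f_1,f_2\} \in C^\infty_E(\f M)$, so $\LGen_{con}(\omega)$ is a subalgebra of $\LGen(\omega,\Omega)$. The identification is then immediate: $f \mapsto (f,-f)$ is a bijection of $C^\infty_E(\f M)$ onto $\LGen_{con}(\omega)$ which, by the computed bracket, carries $\{,\}$ to $\db[,\db]$ and is therefore a Lie algebra isomorphism.

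Everything is routine once the two contact simplifications and the duality identity $\Omega\Fla \com \Lambda\Sha = q_1$ are recorded; the only step deserving care is the integration constant in $h = -f + c$, which must be absorbed by redefining $f$ before the normal form $X_{(f,-f)}$ holds exactly.
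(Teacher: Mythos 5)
Your proposal is correct and follows essentially the same route as the paper: specialize the symmetry condition \eqref{Eq: 2.5} using $d\omega=\Omega$, $i_E\Omega=0$ and $i_{df\Sha}\Omega = df$ (for conserved $f$) to get $df=-dh$, then substitute $h_i=-f_i$ into \eqref{Eq: 2.10}. Your explicit handling of the integration constant in $h=-f+c$ and the closure check $\{f_1,f_2\}\in C^\infty_E(\f M)$ are details the paper leaves implicit, and they are done correctly.
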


\begin{proof}
It follows from $d\omega = \Omega$, $i_E\Omega = 0$ and $i_{df\Sha}\Omega = df$. Then, from
\eqref{Eq: 2.5}, $df = - dh$.
\end{proof}


\subsection{Infinitesimal symmetries of $E $ and $\Lambda$ generated by pairs of functions}
\label{Sec: E pair of functions}

\begin{thm}\label{Th: 2.9}
The pre-Hamiltonian lift \eqref{Eq: 2.1} is an infinitesimal symmetry of the Reeb vector field $ E $ if and only if 
\begin{align}\label{Eq: 2.11}
\big(
d(E \tecka f) - (E\tecka f)L_E\omega
\big)\Sha
& = 
0\,,
\\
(E\tecka h) + (L_E\omega)\Sha \tecka f \label{Eq: 2.12}
&=
0
\,.
\end{align}
\end{thm}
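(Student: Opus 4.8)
The plan is to compute the Lie bracket $[X_{(f,h)},E]$ directly, expand it by bilinearity and the Leibniz rule for the vector-field bracket, substitute the formula \eqref{Eq: 2.3} already recorded in Lemma \ref{Lm: 2.1}, and then extract the two stated conditions from the directness of the splitting $T\f M = \im(\Lam\Sha)\oplus\langle E\rangle$. The vector field $X_{(f,h)}$ is an infinitesimal symmetry of $E$ precisely when $L_{X_{(f,h)}}E = [X_{(f,h)},E] = 0$, so everything reduces to evaluating and decomposing this one bracket.

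First I would write $X_{(f,h)} = df\Sha + h\,E$ and use antisymmetry and bilinearity to obtain
$$[X_{(f,h)},E] = [df\Sha,E] + [h\,E,E] = -[E,df\Sha] - (E\tecka h)\,E,$$
where the final term comes from the Leibniz rule $[h\,E,E] = h\,[E,E] - (E\tecka h)\,E = -(E\tecka h)\,E$, using $[E,E]=0$. Next I would substitute \eqref{Eq: 2.3} for $[E,df\Sha]$, which gives
$$[X_{(f,h)},E] = -\big(d(E\tecka f) - (E\tecka f)\,L_E\ome\big)\Sha - \Lam(L_E\ome,df)\,E - (E\tecka h)\,E.$$

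The decisive step is then to observe that the first summand lies in $\im(\Lam\Sha) = \Ker(\ome)$ while the remaining two lie in $\langle E\rangle$, and by the splitting these subspaces meet only in $0$. Hence $[X_{(f,h)},E]=0$ is equivalent to the simultaneous vanishing of its $\im(\Lam\Sha)$-component and its $\langle E\rangle$-component. The former yields precisely \eqref{Eq: 2.11}, and the latter yields $(E\tecka h) + \Lam(L_E\ome,df) = 0$. Rewriting $\Lam(L_E\ome,df) = df\big((L_E\ome)\Sha\big) = (L_E\ome)\Sha\tecka f$ turns this into \eqref{Eq: 2.12}, and both implications run in the same way, giving the claimed equivalence.

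I expect no genuine obstacle here: the content is entirely the bookkeeping of formula \eqref{Eq: 2.3}, the Leibniz rule, and the directness of the splitting. The only point requiring care is the identification $\Lam(L_E\ome,df) = (L_E\ome)\Sha\tecka f$, which is simply the definition $\alpha\Sha = i_\alpha\Lam$ unwound and paired with $df$.
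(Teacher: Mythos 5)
Your proposal is correct and follows essentially the same route as the paper: the paper likewise computes $[X_{(f,h)},E]$ via formula \eqref{Eq: 2.3}, obtaining $-\big(d(E\tecka f)-(E\tecka f)L_E\omega\big)\Sha - \big((E\tecka h)+(L_E\omega)\Sha\tecka f\big)\,E$, and reads off the two conditions from the directness of the splitting $T\f M=\im(\Lam\Sha)\oplus\langle E\rangle$. Your only additions are the explicit Leibniz-rule step for $[h\,E,E]$ and the unwinding $\Lam(L_E\ome,df)=(L_E\ome)\Sha\tecka f$, both of which the paper leaves implicit.
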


\begin{proof}
Let us assume that the pre-Hamiltonian lift of a pair $(f,h)$ is an infinitesimal symmetry of the Reeb vector field. Then from \eqref{Eq: 2.3} 
\begin{align*}
0 = [X_{(f,h)},E]  = - \big(
d(E \tecka f) - (E\tecka f)L_E\omega
\big)\Sha - ((E\tecka h) + (L_E\omega)\Sha \tecka f)\, E\,
\end{align*}
which proves Theorem \ref{Th: 2.9}.
\end{proof}

If we have two pairs $(f_i,h_i)$  generating infinitesimal symmetries of $E$ then the Lie bracket of the corresponding pre-Hamiltonian lifts is  
\begin{align}\label{Eq: 2.13}
& [X_{(f_1,h_1)},  X_{(f_2,h_2)}]
 =
\big(
d\{f_1,f_2\} + (E{\tecka}f_1)\,L_{df_2\Sha}\omega
- (E{\tecka}f_2)\,L_{df_1\Sha}\omega
\big)\Sha
\\
& \quad\nonumber
+ \big(
 \{f_1,h_2\} - \{f_2,h_1\} - d\omega(df_1\Sha,df_2\Sha) 
\big)\, E\,.
\end{align}
The above vector field \eqref{Eq: 2.13} is not generally the pre-Hamiltonian lift of a pair of functions. So, the sheaf of local generators of infinitesimal symmetries of $E$ is not a Lie algebra.

\begin{thm}\label{Th: 2.10}
The pre-Hamiltonian lift \eqref{Eq: 2.1} is an infinitesimal symmetry of
 $ \Lambda $, i.e. $ L_{X_{(f,h)}} \Lambda = [X_{X_{(f,h)}},\Lambda] = 0 $, if and only if  the following condition is satisfied
\begin{equation}\label{Eq: 2.14}
[df\Sha,\Lambda] - E \wedge (dh + h\, L_E\omega)\Sha = 0\,.
\end{equation}
\end{thm}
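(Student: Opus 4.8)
The plan is to compute the Lie derivative $L_{X_{(f,h)}}\Lambda = [X_{(f,h)},\Lambda]$ directly from the definition of the pre-Hamiltonian lift and the bilinearity of the Schouten--Nijenhuis bracket, and to show it vanishes precisely when \eqref{Eq: 2.14} holds. Since $X_{(f,h)} = df\Sha + h\,E$, I would first expand
\begin{equation*}
[X_{(f,h)},\Lambda] = [df\Sha + h\,E,\Lambda] = [df\Sha,\Lambda] + [h\,E,\Lambda]\,.
\end{equation*}
The first term $[df\Sha,\Lambda]$ is left as is, since it already appears in the claimed condition. The main work is to expand the second term using the Leibniz-type rule for the Schouten--Nijenhuis bracket between the function-scaled vector field $h\,E$ and the bivector $\Lambda$.

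Next I would apply the graded Leibniz rule for the Schouten bracket, which for a function $h$, a vector field $E$, and a bivector $\Lambda$ gives
\begin{equation*}
[h\,E,\Lambda] = h\,[E,\Lambda] - E\wedge(dh)\Sha\,,
\end{equation*}
the second summand arising from differentiating the coefficient $h$ (here $(dh)\Sha = \Lam\Sha(dh)$ is the contraction of $dh$ into $\Lambda$, as fixed by the paper's notation). For the factor $[E,\Lambda]$ I would invoke the formula already recorded in the excerpt, namely $[E,\Lam] = - E\wed \Lam\Sha(L_E\ome) = -E\wedge(L_E\omega)\Sha$. Substituting, the second term becomes
\begin{equation*}
[h\,E,\Lambda] = -h\,E\wedge(L_E\omega)\Sha - E\wedge(dh)\Sha = -E\wedge\big(dh + h\,L_E\omega\big)\Sha\,,
\end{equation*}
so that altogether
\begin{equation*}
[X_{(f,h)},\Lambda] = [df\Sha,\Lambda] - E\wedge\big(dh + h\,L_E\omega\big)\Sha\,,
\end{equation*}
and the stated equivalence with \eqref{Eq: 2.14} is immediate: $L_{X_{(f,h)}}\Lambda = 0$ iff this expression vanishes.

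The step I expect to be the main obstacle is getting the graded Leibniz rule for $[h\,E,\Lambda]$ exactly right, including the sign and the placement of the $(dh)\Sha$ term; conventions for the Schouten--Nijenhuis bracket (and for the musical map $\Sha$ defined via $i_\alpha\Lambda$) vary in the literature, so I would verify that the contraction $(dh)\Sha$ combines correctly with the wedge against $E$ to match the paper's established sign conventions, cross-checking against the analogous computations in Lemma \ref{Lm: 2.1} and the displayed brackets \eqref{Eq: 2.3}--\eqref{Eq: 2.4}. Everything else is a short, formal manipulation once that rule is pinned down.
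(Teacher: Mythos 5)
Your proposal is correct and coincides with the paper's own proof: the paper likewise splits $[X_{(f,h)},\Lambda]=[df\Sha,\Lambda]+[h\,E,\Lambda]$, applies the Leibniz rule $[h\,E,\Lambda]=h\,[E,\Lambda]-E\wedge dh\Sha$, and substitutes $[E,\Lambda]=-E\wedge(L_E\omega)\Sha$ to obtain $-E\wedge(dh+h\,L_E\omega)\Sha$. Your sign conventions, including $(dh)\Sha=i_{dh}\Lambda$, match the paper's exactly, so there is nothing to fix.
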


\begin{proof}
We have
$$
L_{X_{(f,h)}} \Lambda = [df\Sha,\Lambda] + [h\, E,\Lambda]\,. 
$$
Theorem \ref{Th: 2.10} follows from
$$
[h\, E,\Lambda] = h \, [E,\Lambda] - E\wedge dh\Sha = - E\wedge (dh + h\, L_E\omega)\Sha\,.
$$
\vglue-1.3\baselineskip
\end{proof}

\begin{lem}\label{Lm: 2.11}
A vector field $ X_{(f,h)} $ is an infinitesimal symmetry of $ \Lambda $
if and only if the following conditions
\begin{align}\label{Eq: 2.15}
E\tecka f 
& =
0\,,
\\
d\omega(df\Sha,\beta\Sha) + h \, d\omega(E,\beta\Sha) + dh(\beta\Sha)  
& = \label{Eq: 2.16}
0\,,
\end{align}
are satisfied for any 1-form $ \beta$.
\end{lem}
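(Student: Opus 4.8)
The plan is to start from Theorem \ref{Th: 2.10}, which already reduces the symmetry condition $L_{X_{(f,h)}}\Lam=0$ to the single bivector equation \eqref{Eq: 2.14}, and to read that equation off in the splitting
$\wedge^2 T\f M = \wedge^2\im(\Lam\Sha)\oplus\big(\im(\Lam\Sha)\wed\langle E\rangle\big)$, which comes from $T\f M=\im(\Lam\Sha)\oplus\langle E\rangle$ and from $\wedge^2\langle E\rangle=0$. The two summands meet only in $0$, so a bivector equation whose right-hand side sits in the second summand forces the $\wedge^2\im(\Lam\Sha)$--part of the left-hand side to vanish on its own; this is what will separate the two conditions \eqref{Eq: 2.15} and \eqref{Eq: 2.16}.

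First I would obtain a closed expression for the bivector $[df\Sha,\Lam]$. Since a bivector is determined by its contractions with exact $1$-forms, I would contract with an arbitrary $dg$ and use the Leibniz rule $i_{dg}[df\Sha,\Lam]=[df\Sha,dg\Sha]-d\{f,g\}\Sha$ (valid because $df\Sha\tecka g=\{f,g\}$), then substitute formula \eqref{Eq: 2.4}. Reassembling the bivector from these contractions gives
\begin{equation*}
[df\Sha,\Lam]=(E\tecka f)\,(\Lam\Sha\ten\Lam\Sha)(d\ome)-E\wed(i_{df\Sha}d\ome)\Sha\,,
\end{equation*}
where $(\Lam\Sha\ten\Lam\Sha)(d\ome)$ is exactly the bivector occurring in the expression for $[\Lam,\Lam]$ recalled in the Introduction. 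Throughout I would use $L_E\ome=i_E d\ome$, which follows from $i_E\ome=1$.

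Substituting this into \eqref{Eq: 2.14} and regrouping yields
\begin{equation*}
(E\tecka f)\,(\Lam\Sha\ten\Lam\Sha)(d\ome)=E\wed\big(i_{df\Sha}d\ome+dh+h\,L_E\ome\big)\Sha\,.
\end{equation*}
The left-hand side lies in $\wedge^2\im(\Lam\Sha)$, since both factors of $(\Lam\Sha\ten\Lam\Sha)(d\ome)$ lie in $\im(\Lam\Sha)=\Ker(\ome)$, while the right-hand side lies in $\im(\Lam\Sha)\wed\langle E\rangle$; by the direct-sum decomposition above, each side must vanish separately. Vanishing of the right-hand side gives $\big(i_{df\Sha}d\ome+dh+h\,L_E\ome\big)\Sha=0$, i.e. the $1$-form lies in $\langle\ome\rangle=\Ker(\Lam\Sha)$; pairing it with an arbitrary $\beta\Sha$ reproduces \eqref{Eq: 2.16}. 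Vanishing of the left-hand side gives $(E\tecka f)\,(\Lam\Sha\ten\Lam\Sha)(d\ome)=0$, from which I would read off \eqref{Eq: 2.15}. Conversely, granting \eqref{Eq: 2.15} and \eqref{Eq: 2.16}, the displayed formula collapses to $[df\Sha,\Lam]=-E\wed(i_{df\Sha}d\ome)\Sha$, so that \eqref{Eq: 2.14} becomes $E\wed\big(i_{df\Sha}d\ome+dh+h\,L_E\ome\big)\Sha=0$, which is precisely \eqref{Eq: 2.16}; hence $X_{(f,h)}$ is a symmetry of $\Lam$ by Theorem \ref{Th: 2.10}.

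The main obstacle is the last inference of the necessity direction, namely passing from $(E\tecka f)\,(\Lam\Sha\ten\Lam\Sha)(d\ome)=0$ to \eqref{Eq: 2.15}. This step requires the bivector $(\Lam\Sha\ten\Lam\Sha)(d\ome)$ to be non-vanishing, equivalently $[\Lam,\Lam]\not\equiv0$ (the structure is genuinely not coPoisson): where this bivector degenerates, the scalar $E\tecka f$ is simply not constrained by $\Lam$-symmetry. I would therefore either invoke this genericity or restrict the necessity of \eqref{Eq: 2.15} to the open set where $(\Lam\Sha\ten\Lam\Sha)(d\ome)\neq0$; note that the sufficiency direction and the derivation of \eqref{Eq: 2.16} need no such restriction.
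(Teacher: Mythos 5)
Your argument is correct and uses the same ingredients as the paper -- formula \eqref{Eq: 2.4}, $L_E\ome=i_Ed\ome$, and the identity $[\Lam,\Lam]=2\,E\wed(\Lam\Sha\ten\Lam\Sha)(d\ome)$ -- but organizes them differently. The paper never writes a closed formula for $[df\Sha,\Lam]$: it evaluates the bivector on the left of \eqref{Eq: 2.14} on $\ome\wed\beta$, which yields \eqref{Eq: 2.16}, and then on $\beta\wed\gam$ for closed $1$-forms $\beta,\gam$, where after inserting \eqref{Eq: 2.16} everything collapses to $-(E\tecka f)\,d\ome(\beta\Sha,\gam\Sha)$, giving \eqref{Eq: 2.15}. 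Your route -- reassembling $[df\Sha,\Lam]$ from its contractions with exact forms and then invoking the splitting $\wedge^2T\f M=\wedge^2\im(\Lam\Sha)\oplus\big(\im(\Lam\Sha)\wed\langle E\rangle\big)$ -- is the same computation made invariant, and it buys transparency: the two conditions are visibly the components of one equation in complementary summands, whereas in the paper the derivation of \eqref{Eq: 2.15} passes through \eqref{Eq: 2.16} as an intermediate substitution. One sign to fix: with the paper's conventions ($\alpha\Sha=i_\alpha\Lam$ and $(\Lam\Sha\ten\Lam\Sha)(d\ome)(\beta,\gam)=d\ome(\beta\Sha,\gam\Sha)$), contracting \eqref{Eq: 2.4} with $dg$ gives $[df\Sha,\Lam]=-(E\tecka f)\,(\Lam\Sha\ten\Lam\Sha)(d\ome)-E\wed(i_{df\Sha}d\ome)\Sha$, so your first term should carry a minus sign; this is harmless, since only the vanishing of that component enters the argument.

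Your closing caveat is a genuine and worthwhile observation, and here you are more careful than the paper. The paper's step asserting that $(E\tecka f)\,d\ome(\beta\Sha,\gam\Sha)$ ``vanishes if and only if \eqref{Eq: 2.15} is satisfied'' carries exactly the implicit nondegeneracy assumption you identify: necessity of \eqref{Eq: 2.15} holds only at points where $(\Lam\Sha\ten\Lam\Sha)(d\ome)\neq 0$. In the cosymplectic extreme $d\ome=0$ the dual pair $(E,\Lam)$ is coPoisson, $df\Sha$ is a Hamiltonian vector field of the Poisson bivector $\Lam$, and $L_{X_{(f,h)}}\Lam=-E\wed dh\Sha$ vanishes for \emph{every} $f$ once $dh\Sha=0$; so \eqref{Eq: 2.15} is not forced there, and your restriction to the open set where $(\Lam\Sha\ten\Lam\Sha)(d\ome)\neq 0$ (equivalently $[\Lam,\Lam]\neq 0$) is the correct repair of a gap the paper's own proof glosses over.
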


\begin{proof}
It is sufficient to evaluate the 2-vector field on the left hand side of \eqref{Eq: 2.14} on $\ome, \beta$ and $\beta,\gamma$, where $\beta, \gamma$ are closed 1-forms. We get
$$
i_{[df\Sha,\Lambda] - E \wedge (dh + h\, L_E\omega)\Sha}(\omega \wedge \beta) = - \Lambda(i_{df\Sha}d\omega + h\, L_E\omega + dh,\beta)
$$
which vanishes if and only if \eqref{Eq: 2.16} is satisfied.

On the other hand
\begin{align*}
&
i_{[df\Sha,\Lambda] 
 - 
E \wedge (dh + h\, L_E\omega)\Sha} 
(\beta \wedge \gamma) = 
\\
& \qquad\qquad =
\Lambda(df,d\Lambda(\beta,\gamma)) 
+ \Lambda(\beta,d\Lambda(\gamma,df))
+ \Lambda(\gamma,d\Lambda(df,\beta)) 
\\
& \qquad\qquad\quad
- \beta(E)\,\Lambda(h\, L_E\omega + dh,\gamma)
+ \gamma(E)\,\Lambda(h\, L_E\omega + dh,\beta)
\end{align*}
which, by using \eqref{Eq: 2.16}, can be rewritten as
\begin{align*}
&
i_{[df\Sha,\Lambda] 
 - 
E \wedge (dh + h\, L_E\omega)\Sha} 
(\beta \wedge \gamma) = - \tfrac 12 i_{[\Lambda,\Lambda]} (df\wedge\beta\wedge\gamma)
\\
& \qquad\quad
+ \beta(E)\,\Lambda(i_{df\Sha}d\omega,\gamma)
- \gamma(E)\,\Lambda(i_{df\Sha}d\omega,\beta)
\\
& \qquad =
-  i_{E\wedge (\Lambda\Sha\otimes\Lambda\Sha)(d\omega)} (df\wedge\beta\wedge\gamma)
\\
& \qquad\quad+ \beta(E)\,\Lambda(i_{df\Sha}d\omega,\gamma)
- \gamma(E)\,\Lambda(i_{df\Sha}d\omega,\beta)
\\
& \qquad =
- df(E)\, d\omega(\beta\Sha,\gamma\Sha) 
=
- (E \tecka f)\, d\omega(\beta\Sha,\gamma\Sha)
\end{align*}
which vanishes if and only if \eqref{Eq: 2.15} is satisfied.

On the other hand if \eqref{Eq: 2.15} and \eqref{Eq: 2.16} are satisfied, then the 2-vector field 
$L_{X_{(f,h)}}\Lambda $ is the zero 2-vector field.
\end{proof}

So, local generators of infinitesimal symmetries of $\Lambda$ are from $\LGen(\Omega)$ and generates also infinitesimal symmetries of $\Omega$, i.e.
$\LGen(\Lambda) \subset \LGen(\Omega)$ and the Lie algebra of local generators of infinitesimal symmetries of $ \Lambda $ is the subalgebra $(\LGen(\Lambda); \db[ , \db]) \subset (\LGen(\Omega); \db[ , \db]) $ with the bracket
 \begin{align*}
\db[(f_{1},h_{1})  & ;(f_{2},h_{2})\db] =
\\ 
& 
 = \nonumber
 \big(
\{f_{1},f_{2}\};
d\omega(df_{1}\Sha,df_{2}\Sha)
+ h_1 \, E{\tecka}h_{2} 
- h_2 \, E{\tecka}h_{1} 
\big)
\,
\end{align*}
which we obtain from the bracket \eqref{Eq: 2.9} restricted for functions satisfying \eqref{Eq: 2.16}.

\begin{crl}\label{Cr: 2.12}
The Lie algebra of local generators of infinitesimal symmetries of the almost-coPoisson-Jacobi pair $(E,\Lam)$ coincides with the local Lie algebra of generators of infinitesimal symmetries of $(\omega,\Omega)$. 
\end{crl}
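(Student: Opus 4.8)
The plan is to identify first the underlying sheaves of generators and then the bracket. Since the pre-Hamiltonian lift \eqref{Eq: 2.1} is a Lie algebra homomorphism into $(\mathcal{X}(\f M);[,])$, a pair $(f,h)$ generates an infinitesimal symmetry of the almost-coPoisson-Jacobi pair $(E,\Lam)$ exactly when $X_{(f,h)}$ is a symmetry of both $E$ and $\Lam$; that is, the sheaf of generators is $\LGen(E,\Lam) = \LGen(E)\cap\LGen(\Lam)$. By Theorem \ref{Th: 2.9} membership in $\LGen(E)$ is governed by \eqref{Eq: 2.11} and \eqref{Eq: 2.12}, while by Lemma \ref{Lm: 2.11} membership in $\LGen(\Lam)$ is governed by \eqref{Eq: 2.15} and \eqref{Eq: 2.16}.

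The first step is to use \eqref{Eq: 2.15}, i.e. $E\tecka f = 0$, to collapse \eqref{Eq: 2.11}: substituting $E\tecka f = 0$ annihilates both $d(E\tecka f)$ and $(E\tecka f)\,L_E\ome$, so \eqref{Eq: 2.11} holds automatically on $\LGen(\Lam)$. Hence, restricted to $\LGen(\Lam)$, the condition for $X_{(f,h)}$ to be a symmetry of $E$ reduces to the single equation \eqref{Eq: 2.12}, namely $E\tecka h + (L_E\ome)\Sha\tecka f = 0$. Consequently $\LGen(E,\Lam)$ is cut out precisely by the three conditions $E\tecka f = 0$, $E\tecka h + (L_E\ome)\Sha\tecka f = 0$, and \eqref{Eq: 2.16}.

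Next I would match these against Lemma \ref{Lm: 2.6}, which characterizes $\LGen(\omega,\Omega)$ by exactly conditions (1) $E\tecka f = 0$, (2) $E\tecka h + (L_E\ome)\Sha\tecka f = 0$, and (3) the equation $d\omega(df\Sha,\beta\Sha) + h\,d\omega(E,\beta\Sha) + dh(\beta\Sha) = 0$, which is \eqref{Eq: 2.16}. Since the three defining conditions agree term by term, the sheaves of generators coincide, $\LGen(E,\Lam) = \LGen(\omega,\Omega)$. It then remains to see that the Lie algebra structures agree: both brackets are restrictions of the one ambient bracket computed in Lemma \ref{Lm: 2.1} from the Lie bracket of pre-Hamiltonian lifts, the bracket on $\LGen(\omega,\Omega)$ being \eqref{Eq: 2.10} and the bracket on $\LGen(\Lam)\subset\LGen(\Omega)$ being the restriction of \eqref{Eq: 2.9}; being restrictions of the same vector-field bracket to the same underlying set, they necessarily coincide.

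The only genuinely delicate point I expect is the bookkeeping: verifying cleanly that $E\tecka f = 0$ makes \eqref{Eq: 2.11} vanish identically rather than merely simplify, and confirming that the two bracket formulas \eqref{Eq: 2.9} and \eqref{Eq: 2.10} really agree on the \emph{whole} common sheaf once (1)--(3) are imposed, and not just on a proper subset. Everything else is a direct term-by-term comparison of conditions already established in Theorem \ref{Th: 2.9}, Lemma \ref{Lm: 2.11} and Lemma \ref{Lm: 2.6}.
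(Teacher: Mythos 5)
Your proposal is correct and takes essentially the same route as the paper, whose (one-sentence) proof likewise combines Theorem \ref{Th: 2.9} with Lemma \ref{Lm: 2.11} and observes that the resulting conditions are exactly (1)--(3) of Lemma \ref{Lm: 2.6}, i.e.\ the characterization of infinitesimal symmetries of $(\omega,\Omega)$. Your additional bookkeeping --- that $E\tecka f=0$ makes \eqref{Eq: 2.11} vanish identically so that only \eqref{Eq: 2.12} survives, and that both brackets are restrictions of the same ambient bracket of pre-Hamiltonian lifts from Lemma \ref{Lm: 2.1} --- merely makes explicit what the paper leaves implicit.
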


\begin{proof}
From Theorem \ref{Th: 2.9} and Lemma \ref{Lm: 2.11} the pre-Hamiltonian lift \eqref{Eq: 2.1} is an infinitesimal symmetry of the pair
 $(E, \Lambda) $ if and only if the conditions (1), (2) and (3) of Lemma \ref{Lm: 2.6} are satisfied, i.e. if and only if it is an infinitesimal symmetry of the pair $(\omega,\Omega)$.
\end{proof}

\begin{rmk}\label{Rm: 2.1}
{\rm
Let us consider subsheaves of $ C^{\infty}(\f M) \times C^{\infty}(\f M) $ given by conditions (1), (2) and (3) of Lemma \ref{Lm: 2.6}.
The subsheaf $ \LGen(\Omega) $ is given by the condition (1), the subsheaf $ \LGen(\omega) $ is given by the conditions (2) and (3) and the subsheaf $ \LGen(\Lambda) $ is given by the conditions (1) and (3). If we assume the subsheaf given by conditions 
(1) and (2) then, from Theorem \ref{Th: 2.9}, it is the subsheaf $ \LGen(E,\Omega) $ of local generators of infinitesimal symmetries of the Reeb vector field $E$ and $\Omega$. 
The corresponding bracket will be
\begin{align*}
\db[(f_{1},h_{1})  & ;(f_{2},h_{2})\db] =
\\ 
& 
 = \nonumber
\big(
\{f_{1},f_{2}\};\{f_{1},h_{2}\} - \{f_{2},h_{1}\} 
- d\omega(df_{1}\Sha,df_{2}\Sha)
\big)\,.
\end{align*}
So we obtain the following local Lie algebras of generators of infinitesimal symmetries

\bigskip
\begin{center}
\begin{tabular}{||l|l||}
\hline\hline
$ (\LGen(\Ome),\db[,\db]) $ & (1), $ E\tecka f = 0 $
\\
\hline
$ (\LGen(\omega),\db[,\db]) $ & (2), $ E\tecka h + (L_E\omega)\Sha\tecka f = 0 $
\\
 & (3), $ d\omega(df\Sha,\beta\Sha) + h \, d\omega(E,\beta\Sha) + dh(\beta\Sha) = 0$
\\ 
 \hline
$ (\LGen(\Lambda),\db[,\db]) $ & (1) and (3)
\\
\hline
$ (\LGen(E,\Ome),\db[,\db]) $ & (1) and (2)
\\
\hline
$ (\LGen(\omega,\Ome),\db[,\db]) $ &
\\
$ \equiv  (\LGen(E,\Lambda),\db[,\db])$ & (1), (2) and (3)
\\
\hline \hline
\end{tabular}
\end{center}

}{ }\hfill\END
\end{rmk}

\begin{rmk}\label{Rm: 2.2}
{\rm
In the Lie algebra $ (\LGen(\Omega); \db[,\db]) $ we have the Abelian subalgebra formed by pairs of constant functions $\M K(\f M) = \B R \times \B R \subset  \LGen(\Omega) $. 
The centralizer of the Lie subalgebra $\M K(\f M)$ in $ \LGen(\Omega) $
is the Lie subalgebra $ (\LGen(E,\Omega); \db[,\db]) \subset  (\LGen(\Omega); \db[,\db])  $ of generators of infinitesimal symmetries of $E$ and $\Omega$. 

Really, let $(c,k) \in \M K(\f M)$ and $(f,h) \in \LGen(\Omega)$ such that
$$
(0;0) 
=
\db[
(c,k);(f,h)
\db]
= 
\big(0; - k\,(E{\tecka}h + (L_E\omega)\Sha{\tecka} f)
\big)\,.
$$
Then, from Theorem \ref{Th: 2.9}, it follows that the pair $ (f,h) $
generates an infinitesimal symmetry of  $E$. 
}\hfill\END
\end{rmk}

\subsection{Multiplicative algebra $ (\LGen(\Omega), \cdot) $}

In Section \ref{Sec: omega pais of functions}  we have defined the local Lie algebra structure on  $\LGen(\Omega)$. 
 We define the multiplication in $\LGen(\Omega)$ by
\begin{equation*}
(f_1,h_1)(f_2,h_2) = (f_1\, f_2,f_1\, h_2 + f_2\, h_1)
\end{equation*}
which defines on $\LGen(\Omega)$ the structure of an associative commutative algebra with the unit $(1,0)$. Really it is easy to see that
$$
(f_1,h_1)(f_2,h_2) = (f_2,h_2)(f_1,h_1)\,,
$$
$$
\big((f_1,h_1)(f_2,h_2)\big)(f_3,h_3)
=
(f_1,h_1)\big((f_2,h_2)(f_3,h_3)\big)\,,
$$
$$
(1,0)(f,h) = (f,h)\,. 
$$

\begin{lem}\label{Lm: 2.13}
We have 
\begin{equation*}
X_{(f_1\,f_2, f_1\,h_2 + f_2 \, h_1)}
=
f_1 \, X_{(f_2,h_2)}
+ f_2 \, X_{(f_1,h_1)}\,.
\end{equation*}
\end{lem}

\begin{proof}
We have
\begin{align*}
X_{(f_1\,f_2, f_1\,h_2 + f_2 \, h_1)}
&= 
d(f_1\, f_2)\Sha + (f_1\,h_2 + f_2 \, h_1)\, E
\\
& =
f_1 \, (df_2\Sha + h_2 \, E) + 
f_2 \, (df_1\Sha + h_1 \, E)
\\ 
& =
 f_1 \, X_{(f_2,h_2)}
+ f_2 \, X_{(f_1,h_1)}\,.
\end{align*}
\vglue-1.1\baselineskip
\end{proof}

From \eqref{Eq: 2.9} it is easy to see that the bidifferential operator $D$ on $\LGen(\Omega)$ given by
$$
D_{(f_1,h_1)}(f_2,h_2) = \db[ (f_1,h_1); (f_2,h_2) \db] 
$$
is of order 1. 

From the Jacobi identity we get
\begin{multline*}
D_{\db[(f_1,h_1);(f_2,h_2)\db]}(f_3,h_3) =
\big(D_{(f_1,h_1)} D_{(f_2,h_2)} - D_{(f_2,h_2)} D_{(f_1,h_1)}
\big)\, (f_3,h_3)
\,
\end{multline*}
and 
\begin{multline*}
D_{(f_1,h_1)}\db[(f_2,h_2);(f_3,h_3)\db] =
\\
=
\db[D_{(f_1,h_1)}(f_2,h_2);(f_3,h_3\db]) + \db[(f_2,h_2); D_{(f_1,h_1)}(f_3,h_3)\db]
\,,
\end{multline*}
i.e., $D_{(f,h)}$ is a derivation on $(\LGen(\Omega); \db[,\db])$.

\begin{thm}\label{Th: 2.14}
The 1st order differential operator
$$
D_{(f,h)}:\LGen(\Omega)\to \LGen(\Omega)
$$
is a derivation on $ (\LGen(\Omega), \cdot ) $, i.e. for $(f_i,h_i)\in \LGen(\Omega)$, $ i = 1,2,3 $, we have
\begin{multline*}
D_{(f_1,h_1)}\big((f_2,h_2)(f_3,h_3)\big) =
(f_2,h_2) D_{(f_1,h_1)}(f_3,h_3)
\\
+
(f_3,h_3) D_{(f_1,h_1)}(f_2,h_2)\,.
$$
\end{multline*}
\end{thm}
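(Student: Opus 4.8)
The plan is to avoid expanding the bracket~\eqref{Eq: 2.9} term by term and instead read off the Leibniz rule from two facts already in hand: the pre-Hamiltonian lift $(f,h)\mapsto X_{(f,h)}$ is a Lie algebra homomorphism from $(\LGen(\Omega),\db[,\db])$ onto $(\M L(\Omega),[,])$, so that $X_{\db[A;C\db]}=[X_A,X_C]$, and Lemma~\ref{Lm: 2.13}, which is itself the Leibniz rule for this lift with respect to the product $\cdot$. Write $A=(f_1,h_1)$, $B=(f_2,h_2)$, $C=(f_3,h_3)$, and recall that on $\LGen(\Omega)$ the first arguments are conserved, $E\tecka f_i=0$. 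I abbreviate $D_AC=\db[A;C\db]$, whose first component is $\{f_1,f_3\}$, and likewise for $D_AB$.

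First I would compute the lift of the left-hand side. Since $BC=(f_2f_3,\,f_2h_3+f_3h_2)$, Lemma~\ref{Lm: 2.13} gives $X_{BC}=f_2\,X_C+f_3\,X_B$, and the homomorphism property turns $X_{D_A(BC)}$ into $[X_A,X_{BC}]$. Expanding with the Leibniz rule $[X_A,g\,Y]=(X_A\tecka g)\,Y+g\,[X_A,Y]$ for the bracket of vector fields, and using $X_A\tecka f_2=df_2(df_1\Sha)+h_1\,df_2(E)=\{f_1,f_2\}$ (here $df_2(E)=E\tecka f_2=0$) together with the analogue for $f_3$, I obtain
\[
X_{D_A(BC)}=\{f_1,f_2\}\,X_C+f_2\,[X_A,X_C]+\{f_1,f_3\}\,X_B+f_3\,[X_A,X_B].
\]
Next I would lift the right-hand side: applying Lemma~\ref{Lm: 2.13} to the products $B\cdot D_AC$ and $C\cdot D_AB$, whose first factors carry the first components $\{f_1,f_3\}$ and $\{f_1,f_2\}$, yields $X_{B\cdot D_AC}=f_2\,X_{D_AC}+\{f_1,f_3\}\,X_B$ and $X_{C\cdot D_AB}=f_3\,X_{D_AB}+\{f_1,f_2\}\,X_C$. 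Substituting $X_{D_AC}=[X_A,X_C]$ and $X_{D_AB}=[X_A,X_B]$ and adding, the result is term-by-term identical to the display, so $X_{D_A(BC)}=X_{B\cdot D_AC+C\cdot D_AB}$.

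The hard part is that this proves only equality of the \emph{lifts}, and the lift is not injective: its kernel consists of the pairs $(c,0)$ with $c$ locally constant. I would close the gap componentwise. Applying $\omega$ to the two equal vector fields and using $\omega(X_{(f,h)})=h$ shows the second ($E$-)components agree. For the first components I appeal directly to the Leibniz rule of the Poisson bracket, $\{f_1,f_2f_3\}=f_2\{f_1,f_3\}+f_3\{f_1,f_2\}$, which is exactly the first component of the right-hand side; this sidesteps the non-injectivity entirely. Together the two components yield the asserted identity. (Alternatively the whole statement follows from a direct expansion of~\eqref{Eq: 2.9} and the product using the Leibniz rules for $\{,\}$, $E\tecka$ and $(L_E\omega)\Sha\tecka$ and the $C^\infty$-linearity of $\Sha$ and $d\omega$; the one nonroutine point there is that the cross terms in $E\tecka(f_2h_3+f_3h_2)$ drop out precisely because $f_2,f_3$ are conserved.)
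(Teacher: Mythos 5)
Your proof is correct, but it takes a genuinely different route from the paper's. The paper proves Theorem \ref{Th: 2.14} by direct expansion: it verifies from the explicit bracket \eqref{Eq: 2.9} and the definition of the product that $\db[(f_1,h_1);(f_2,h_2)(f_3,h_3)\db] = (f_2,h_2)\,\db[(f_1,h_1);(f_3,h_3)\db] + (f_3,h_3)\,\db[(f_1,h_1);(f_2,h_2)\db]$ --- essentially the computation you relegate to your closing parenthetical remark, where, as you correctly note, the only non-routine cancellation uses conservation $E\tecka f_i=0$. You instead transport the problem to vector fields: since $\LGen(\Omega)$ is closed under the product (because $E\tecka(f_2 f_3)=0$), the homomorphism property $X_{\db[A;C\db]}=[X_A,X_C]$ established in the proof of Theorem \ref{Th: 2.4}, Lemma \ref{Lm: 2.13}, and the vector-field Leibniz rule $[X_A,g\,Y]=(X_A\tecka g)\,Y+g\,[X_A,Y]$ combined with $X_A\tecka f_i=\{f_1,f_i\}$ (the other place conservation enters) yield equality of the lifts of the two sides of the asserted identity. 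Your handling of the non-injectivity of the lift is the essential extra step, and you do it correctly: on $\LGen(\Omega)$ the kernel is indeed the pairs $(c,0)$ with $c$ locally constant (regularity gives $\Ker(\Lambda\Sha)=\langle\omega\rangle$, so $df\Sha=0$ means $df\in\langle\omega\rangle$, and $df(E)=0$ then forces $df=0$); the second components are recovered by pairing with $\omega$, since $\omega(X_{(f,h)})=h$ and the lift is additive, and the first components follow from the purely tensorial Leibniz rule $\{f_1,f_2f_3\}=f_2\{f_1,f_3\}+f_3\{f_1,f_2\}$, which needs no integrability property of $\Lambda$. What your route buys is conceptual clarity: the derivation property is exhibited as the image, under the lift, of the ordinary Leibniz rule for the Lie bracket of vector fields, and it is transparent exactly where each hypothesis on $\LGen(\Omega)$ is used; the paper's direct expansion is shorter to state but leaves this mechanism implicit, and it does not have to confront the kernel issue at all since it never leaves the level of pairs of functions.
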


\begin{proof}
From \eqref{Eq: 2.9} we can prove
\begin{multline*}
\db[(f_1,h_1);(f_2,h_2)(f_3, h_3) \db]
 =
(f_2,h_2)\, \db[(f_1,h_1);(f_3, h_3) \db]
\\
+ (f_3,h_3) \db[(f_1,h_1);(f_2, h_2) \db]
 \,
\end{multline*}
which proves Theorem \ref{Th: 2.14}.
\end{proof}

\subsection{Lie derivation of pairs of functions}
We define the Lie derivation of pairs of functions $(f,h)\in C^\infty(\f M)\times C^\infty(\f M)$ given by a vector field $X$ on $\f M$ by $L_X(f,h) = (L_Xf,L_Xh) = (X{\tecka}f,X{\tecka}h) $.
Generally $ L_X $ is not an operator on $ \LGen(\Omega) $. 

\begin{lem}\label{Lm: 2.15}
Let $X$ be a vector field on $\f M$ such that $L_XE = [X,E] = 0$.
Then for $(f,h) \in \LGen(\Omega)$ the Lie derivation
$L_X(f,h)\in \LGen(\Omega)$.
\end{lem}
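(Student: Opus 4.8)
The plan is to reduce the statement to a single scalar identity. Recall that, as recorded in the discussion preceding Theorem~\ref{Th: 2.5} and in the table of Remark~\ref{Rm: 2.1}, the subsheaf $\LGen(\Omega)$ is cut out by condition~(1) of Lemma~\ref{Lm: 2.6} alone, namely $E\tecka f = i_E\,df = 0$; equivalently $\LGen(\Omega) = C^\infty_E(\f M)\times C^\infty(\f M)$. In particular there is \emph{no} constraint on the second component. Hence, writing $L_X(f,h) = (L_Xf, L_Xh) = (X\tecka f, X\tecka h)$, to prove $L_X(f,h)\in\LGen(\Omega)$ I need only verify that the first component $X\tecka f$ is again conserved, i.e. that $E\tecka(X\tecka f)=0$; the second component $X\tecka h$ is unconstrained and requires nothing.

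The key step is the commutation of the derivations $E$ and $X$ on functions. Since by hypothesis $[X,E]=0$, we also have $[E,X]=0$, so for any function $g$ one has $E\tecka(X\tecka g) = X\tecka(E\tecka g)$. Applying this with $g=f$ and using that $(f,h)\in\LGen(\Omega)$ gives $E\tecka f = 0$, I obtain
\begin{equation*}
E\tecka(X\tecka f) = X\tecka(E\tecka f) = X\tecka 0 = 0\,,
\end{equation*}
so that $X\tecka f\in C^\infty_E(\f M)$. Therefore $L_X(f,h) = (X\tecka f, X\tecka h)\in C^\infty_E(\f M)\times C^\infty(\f M) = \LGen(\Omega)$, as desired.

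There is essentially no obstacle here: the only point requiring care is recognizing that the membership condition for $\LGen(\Omega)$ restricts $f$ but leaves $h$ free, so that the whole content of the lemma is the commutator identity $[E,X]=0 \Rightarrow E\tecka(X\tecka f)=X\tecka(E\tecka f)$. It is worth noting that the hypothesis $[X,E]=0$ is exactly what is needed and cannot be dropped: without it, $E\tecka(X\tecka f)$ acquires the extra term $[E,X]\tecka f$, which need not vanish, and $L_X$ would fail to preserve the conservation condition on the first slot.
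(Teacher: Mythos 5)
Your proof is correct and is essentially the paper's own argument: both reduce the claim to showing $X\tecka f\in C^\infty_E(\f M)$ (since membership in $\LGen(\Omega)$ constrains only the first component) and then deduce $E\tecka(X\tecka f)=0$ from the commutator identity $L_{[X,E]}f = L_XL_Ef - L_EL_Xf$ together with $[X,E]=0$ and $E\tecka f=0$. Your closing remark on the necessity of the hypothesis $[X,E]=0$ is a small, correct addition beyond what the paper records.
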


\begin{proof}
We have to prove that $L_Xf\in C^\infty_E(\f M)$. 
But from
$$
0 = L_{[X,E]}f = L_XL_E f - L_EL_X f = - E{\tecka}(L_X f)\,,
$$
hence $L_X(f,h)\in \LGen(\Omega)$.
\end{proof}

\begin{lem}\label{Lm: 2.16}
If a vector field   $ X$ is an infinitesimal symmetry of $ E $ then
$L_X$ is a derivation on $(\LGen(\Omega), \cdot )$, i.e.
$$
L_X\big((f_1,h_1)(f_2,h_2)\big)
=
(L_Xf_1,L_Xh_1) (f_2,h_2) 
+
(f_1,h_1)  (L_Xf_2,L_Xh_2)\,.
$$
\end{lem}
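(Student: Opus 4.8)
The plan is to reduce the claim to the ordinary Leibniz rule for the directional derivative, applied separately in each of the two slots. First I would observe that the hypothesis $L_XE=[X,E]=0$ is exactly the assumption of Lemma~\ref{Lm: 2.15}, so that $L_X$ maps $\LGen(\Omega)$ into itself; this is what makes the assertion meaningful, since a derivation must be an endomorphism of the algebra $(\LGen(\Omega),\cdot)$. In particular the first components of all pairs produced below are again conserved, so the resulting pairs lie in $\LGen(\Omega)=C^\infty_E(\f M)\times C^\infty(\f M)$.

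Next I would simply compute both sides. By definition of the multiplication,
$$
(f_1,h_1)(f_2,h_2)=(f_1\,f_2,\;f_1\,h_2+f_2\,h_1),
$$
and $L_X(f,h)=(X\tecka f,\,X\tecka h)$ acts componentwise. Applying $L_X$ and using the elementary Leibniz rule $X\tecka(f\,g)=(X\tecka f)\,g+f\,(X\tecka g)$ for functions, the first component of $L_X\big((f_1,h_1)(f_2,h_2)\big)$ becomes $(X\tecka f_1)\,f_2+f_1\,(X\tecka f_2)$, while the second component becomes $(X\tecka f_1)\,h_2+f_1\,(X\tecka h_2)+(X\tecka f_2)\,h_1+f_2\,(X\tecka h_1)$. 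Expanding the two terms on the right-hand side, namely $(X\tecka f_1,X\tecka h_1)(f_2,h_2)$ and $(f_1,h_1)(X\tecka f_2,X\tecka h_2)$, by the same multiplication rule and adding them reproduces these two expressions term by term, which establishes the derivation identity.

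The verification is thus entirely routine, and there is essentially no obstacle: the derivation identity is a formal consequence of Leibniz applied in each slot and holds for an arbitrary vector field $X$. The only point that genuinely requires the hypothesis $[X,E]=0$ is the well-definedness observation from the first paragraph, namely that $L_X$ preserves $\LGen(\Omega)$; without it the directional derivative $X\tecka f$ of a conserved function need not be conserved, and the map $L_X$ would fail to be an operator on $\LGen(\Omega)$.
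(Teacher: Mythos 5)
Your proof is correct and follows essentially the same route as the paper: Lemma~2.15 guarantees $L_X$ preserves $\LGen(\Omega)$, and the derivation identity then follows by a componentwise Leibniz computation. Your added remark that the identity itself is formal and the hypothesis $[X,E]=0$ is needed only for well-definedness is accurate and consistent with the paper's argument.
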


\begin{proof}
By Lemma \ref{Lm: 2.15} $ L_X\big((f_1,h_1)(f_2,h_2)\big) \in \LGen(\Omega)$ and
\begin{align*}
L_X\big((f_1,h_1) & (f_2,h_2)\big)
  =
L_X (f_1\, f_2, f_1\, h_2 + f_2\, h_1)
\\
& =
\big( (L_Xf_1)\, f_2 + f_1\,(L_Xf_2);
\\
& \quad (L_Xf_1)\, h_2 + f_1\,(L_Xh_2)
+ (L_Xf_2)\, h_1 + f_2\,(L_Xh_1)
\big)
\\
& =
(L_Xf_1, L_Xh_1) (f_2, h_2) + (f_1, h_1) (L_Xf_2, L_Xh_2)\,.
\end{align*} 
\vglue-1.3\baselineskip
\end{proof}

On the other hand it is easy to see that if $ X $ is an infinitesimal symmetry of $ E $, then $ L_X $ is not  a derivation on the Lie algebra $(\LGen(\Omega), \db[,\db])$. But we have

\begin{thm}\label{Th: 2.17}
Let $X$ be an infinitesimal symmetry of the
almost-coPoisson-Jacobi structure $(E,\Lambda)$, then $X$ is a Lie derivation on the Lie algebra
$(\LGen(\omega,\Omega); \db[ , \db ])$.
\end{thm}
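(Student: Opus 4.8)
The plan is to show that the Lie derivation $L_X$ defined on pairs of functions by $L_X(f,h)=(L_Xf,L_Xh)$ is a derivation of the bracket $\db[,\db]$ given in \eqref{Eq: 2.10}, under the hypothesis that $X$ is an infinitesimal symmetry of $(E,\Lambda)$. First I would verify that $L_X$ actually maps $\LGen(\omega,\Omega)$ into itself. Since $X$ is an infinitesimal symmetry of $(E,\Lambda)$, it is in particular an infinitesimal symmetry of $E$, so by Lemma \ref{Lm: 2.15} we get $L_Xf\in C^\infty_E(\f M)$ whenever $f\in C^\infty_E(\f M)$; this secures condition (1) of Lemma \ref{Lm: 2.6}. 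Conditions (2) and (3) then need to be checked for $(L_Xf,L_Xh)$, and this is where the full symmetry hypothesis (not just symmetry of $E$) must be used, presumably via $L_X\Lambda=0$ and the compatibility of $X$ with the distinguished forms.

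The core computation is to establish the Leibniz identity
\begin{equation*}
L_X\db[(f_1,h_1);(f_2,h_2)\db]
=\db[L_X(f_1,h_1);(f_2,h_2)\db]
+\db[(f_1,h_1);L_X(f_2,h_2)\db]\,.
\end{equation*}
My approach would be to apply $L_X$ to each of the three scalar entries appearing in the bracket \eqref{Eq: 2.10}, namely $\{f_1,f_2\}$, the second component, and to commute $L_X$ past the operations $\{,\}$, $d\omega(d\cdot\Sha,d\cdot\Sha)$, and $E\tecka(\cdot)$. For the Poisson-type term the key relation is that $L_X$ commutes with $\{,\}$ precisely because $L_X\Lambda=0$: indeed $L_X\{f_1,f_2\}=L_X\big(\Lambda(df_1,df_2)\big)=(L_X\Lambda)(df_1,df_2)+\Lambda(dL_Xf_1,df_2)+\Lambda(df_1,dL_Xf_2)=\{L_Xf_1,f_2\}+\{f_1,L_Xf_2\}$. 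For the term $E\tecka h$ one uses $[X,E]=0$ to commute $L_X$ with $E\tecka$. The remaining term $d\omega(df_1\Sha,df_2\Sha)$ is the delicate one.

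The hard part will be the $d\omega$-term, since $d\omega$ is not itself preserved by $X$ in general; only $\Omega=$ (the cosymplectic part) and the pair $(E,\Lambda)$ are. I expect one must express $d\omega(df_1\Sha,df_2\Sha)$ in a form where the symmetry of $(E,\Lambda)$ can be brought to bear — most likely by rewriting it using $\Lambda\Sha$ and the structure equation $[\Lambda,\Lambda]=2E\wed(\Lambda\Sha\ten\Lambda\Sha)(d\omega)$ from the introduction, so that $L_X(d\omega(df_1\Sha,df_2\Sha))$ can be analyzed through $L_X\Lambda=0$ and $L_XE=0$ rather than through $L_X(d\omega)$ directly. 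Once all three scalar entries satisfy the Leibniz rule separately, collecting terms yields the claimed derivation property. I would close by remarking that, because $L_X$ preserves $\LGen(\omega,\Omega)$ (the first paragraph) and satisfies Leibniz (the second and third paragraphs), $L_X$ is indeed a Lie derivation on $(\LGen(\omega,\Omega);\db[,\db])$, completing the proof.
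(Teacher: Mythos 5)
Your skeleton matches the paper's proof (first show $L_X$ preserves $\LGen(\omega,\Omega)$, then verify the Leibniz identity on the bracket \eqref{Eq: 2.10}), and your handling of the Poisson term via $L_X\Lambda=0$ and of the $E\tecka$ terms via $[X,E]=0$ is exactly what the paper does. But the two load-bearing steps are missing, and the one concrete claim you make about the obstruction is wrong. The premise behind your treatment of the ``hard'' term is false: $d\omega$ \emph{is} preserved. Since the structure is regular, $\omega$ is the unique $1$-form with $i_\omega\Lambda=0$ and $i_E\omega=1$; the flow of $X$ preserves both defining conditions when $L_XE=0$ and $L_X\Lambda=0$, so $L_X\omega=0$ and hence $L_Xd\omega=0$. (The same uniqueness-of-the-dual argument is what justifies your unexplained assertion that $\Omega$ is preserved --- you cannot consistently grant $L_X\Omega=0$ and deny $L_X d\omega=0$, as both come from $(\omega,\Omega)$ being the unique dual of $(E,\Lambda)$; compare Corollary \ref{Cr: 2.12}.) The paper's proof records precisely $L_Xd\omega=0$, together with $L_Xdf\Sha=d(L_Xf)\Sha$ and $L_EL_Xf=L_XL_Ef$, and then the term $d\omega(df_1\Sha,df_2\Sha)$ is no harder than the others. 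Your proposed detour through $[\Lambda,\Lambda]=2\,E\wed(\Lambda\Sha\ten\Lambda\Sha)(d\omega)$ is stated only as an expectation, not a computation; it could in fact be completed ($L_X[\Lambda,\Lambda]=0$ and $L_XE=0$ give $E\wed L_X\big((\Lambda\Sha\ten\Lambda\Sha)(d\omega)\big)=0$, hence $L_X\big((\Lambda\Sha\ten\Lambda\Sha)(d\omega)\big)=E\wed Y$ for some vector field $Y$, and this annihilates $df_1\wed df_2$ because $E\tecka f_i=0$), but none of this appears in your text.

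The second gap is that the membership check is deferred with ``presumably'': you never verify conditions (2) and (3) of Lemma \ref{Lm: 2.6} for the pair $(L_Xf,L_Xh)$, and this is roughly half of the paper's proof. There, condition (2) is obtained by applying $L_X$ to the identity $0=dh(E)+\Lambda(L_E\omega,df)$ and using $L_XE=0$, $L_X\Lambda=0$ and $L_EL_X\omega=0$; condition (3) is obtained likewise, using $L_Xd\omega=0$ and $L_Xdf\Sha=d(L_Xf)\Sha$. Without these verifications the Leibniz identity is not even well-posed as an identity in $\LGen(\omega,\Omega)$ --- and under your mistaken premise that $d\omega$ is not preserved, you would have no visible means of establishing condition (3) for $(L_Xf,L_Xh)$ at all. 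In short: right architecture, correct key identities for two of the three scalar entries, but the step that converts the hypothesis ``$X$ is a symmetry of $(E,\Lambda)$'' into the usable fact $L_X\omega=0$ (hence $L_Xd\omega=0$) is absent, and both places where real computation is required are left as declarations of intent.
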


\begin{proof}
 $L_X E = 0$ and $L_X\Lambda =0$ imply 
$$
L_X\{ f_1,f_2 \} = \{ L_Xf_1,f_2 \} + \{ f_1, L_Xf_2 \}\,,
$$
$L_EL_X\omega =0$, $L_EL_X f = L_XL_Ef$ and $L_Xd\omega=0$.

We have to prove that $ (L_Xf,L_Xh) \in \LGen(\omega,\Omega) $ for any $ (f,h) \in \LGen(\omega,\Omega) $. First, from Lemma \ref{Lm: 2.15}, $ L_Xf \in C_E^\infty(\f M)$. Further, we have to prove conditions (2) and (3) of Lemma \ref{Lm: 2.6} for the pair of functions $ (L_X f, L_X h) $.
The condition (2) can be expressed as
\begin{equation*}
0 = dh(E) + \Lambda(L_E\omega,df)\,.
\end{equation*}
If we apply $ L_X  $ on the above identity we get, from $ L_X df = dL_X f $,
\begin{align*}
0
& =
(L_Xdh)(E) + dh(L_XE) + (L_X\Lambda)(L_E\omega,df)
\\
&\quad
+ \Lambda(L_XL_E\omega,df) + \Lambda(L_E\omega,L_Xdf)
\\
& =
(dL_X h) (E) + \Lambda(L_E\omega,dL_Xf)
\end{align*}
which is the condition (2) for the pair  $ (L_X f, L_X h) $.
 
Further, applying $L_X$ on (3) we get 
\begin{align*}
0
& =
(L_Xd\omega)(df\Sha, \beta\Sha) + d\omega(L_Xdf\Sha,\beta\Sha)
+ d\omega(df\Sha,L_X\beta\Sha)
\\
& \quad
+ (L_Xh)\, d\omega(E,\beta\Sha) + h\, d\omega(L_XE,\beta\Sha)
+ h\, d\omega(E,L_X\beta\Sha)
\\
& \quad
+ (L_Xdh)(\beta\Sha) + dh(L_X\beta\Sha)
\\
&
=
d\omega(d(L_Xf)\Sha,\beta\Sha)
+ (L_Xh)\, d\omega(E,\beta\Sha)
+ (dL_Xh)(\beta\Sha)
\end{align*}
which follows from $L_X df\Sha = d(L_Xf)\Sha$ (see \cite{Jan15}, Lemma 2.15), and the condition (3) 
for the pair  $ (L_X f, L_X h) $ is satisfied.
Hence $ (L_X f, L_X h)\in \LGen(\omega,\Omega). $

Finally, we assume the bracket \eqref{Eq: 2.10}
 in the form
\begin{equation*}
\db[  (f_1,h_1);(f_2,h_2)\db]
=
\big(\{ f_1,f_2\} ; d\omega(df_1\Sha,df_2\Sha) 
+ h_1 \, E\tecka h_2 - h_2 \, E\tecka h_1
\big)\,.
\end{equation*} 
Then
\begin{align*}
L_X\db[ & (f_1,h_1);(f_2,h_2)\db] 
= 
\big(\{ L_X f_1,f_2\} + \{ f_1,L_X f_2\} ;
\\
& \qquad
d\omega(L_Xdf_1\Sha,df_2\Sha)
+ d\omega(df_1\Sha,L_X df_2\Sha)
\\
& \qquad
+ L_X h_1\, L_Eh_2 + h_1\, L_XL_Eh_2
- L_X h_2\, L_Eh_1
- h_2\, L_XL_Eh_1
\big)\,.
\end{align*}
On the other hand 
\begin{multline*}
\db[ (L_Xf_1,L_Xh_1);(f_2,h_2)\db] 
+ \db[(f_1,h_1) ; (L_Xf_2,L_Xh_2)\db]
=
\\
= \big(\{ L_X f_1,f_2\} ; d\omega(d(L_Xf_1)\Sha,df_2\Sha) 
\\
{} \quad
+ L_X h_1\, L_Eh_2 - h_2\, L_EL_Xh_1 
\big)
\\
+ \big(
\{ f_1,L_X f_2\} ;   d\omega(df_1\Sha,d(L_X f_2)\Sha)
\\
{} \quad
+ h_1\, L_E L_Xh_2 - L_X h_2 \, L_Eh_1 
\big)
\end{multline*}
and  from  $L_X df\Sha = d(L_Xf)\Sha$
we get
\begin{align*}
L_X\db[  (f_1,h_1) 
& ;
(f_2,h_2)\db] 
= 
\\
& =
\db[ (L_Xf_1,L_Xh_1);(f_2,h_2)\db] 
+ \db[(f_1,h_1) ; (L_Xf_2,L_Xh_2)\db]\,,
\end{align*}
i.e. $ L_X $ is a derivation on the Lie algebra
$ (\LGen(\omega,\Omega); \db[ , \db ]). $
\end{proof}

\begin{rmk}\label{Rm: 2.3}
{\rm
We have
\begin{equation}\label{Eq: 2.23}
\db[  (f_1,h_1) ;(f_2,h_2)\db]
 =
\frac 12 \big( L_{X_{(f_1,h_1)}}(f_2,h_2)
-
L_{X_{(f_2,h_2)}}(f_1,h_1)\big)\,.
\end{equation}
Really
\begin{align*}
L_{X_{(f_1,h_1)}}(f_2,h_2)
& -
L_{X_{(f_2,h_2)}}(f_1,h_1)
=
\\
& = 
\big(
2\, \{f_1,f_2); \{f_1,h_2\} - \{f_2,h_1\} 
+ h_1 \, E\tecka h_2 - h_2 \, E\tecka h_1
\big)\,
\end{align*}
and from (2) and (3) of Lemma \ref{Lm: 2.6} we have 
\begin{align*}
\{f_1,h_2\} - \{f_2,h_1\}
= 2\, d\omega(df_1\Sha,df_2\Sha)+ h_1 \, E\tecka h_2 - h_2 \, E\tecka h_1
\end{align*}
which implies \eqref{Eq: 2.23}.
\hfill\END
}
\end{rmk}

\section{Lie algebroid and infinitesimal symmetries}
\setcounter{equation}{0}
\label{Sec: LA}

Let us assume a closed 2--form $F$ on $\f M$ and the vector bundle
$\f E = T\f M \oplus_{\f M} \Bbb R \to \f M$. Then sections of $\f E$ are pairs $(X,f)$ of vector fields on $\f M$ and functions on $\f M$. We define a bracket
of sections of $\f E$ by
\begin{equation}\label{Eq: 3.1}
\db[(X_1,f_1);(X_2,f_2) \db]_F
=
\big(
[X_1, X_2] ; X_1.f_2 - X_2.f_1 + F(X_1, X_2)
\big)\,.
\end{equation}
This bracket defines an $ F $-Lie algebroid structure (see, for instance, \cite{Mac05}) on $\f E$ with the anchor given by the projection on the first component. 

Really, the bracket \eqref{Eq: 3.1} is antisymmetric and from the closure of $F$ the Jacobi identity is satisfied. Moreover,
for any $h \in C^\infty(\f M)$ we get
\begin{align*}
\db[(X_1,f_1);h\,(X_2,f_2) \db]_F
& =
\db[(X_1,f_1);(h\,X_2,h\,f_2) \db]_F
\\
& =
h\, \db[(X_1,f_1);(X_2,f_2) \db]_F + (X_1.h)\, (X_2,f_2)\,
\end{align*}
and the Leibniz-type formula is satisfied. 

 
 \medskip
 Now, let us assume the sheaf mapping from the local Lie algebra $ (\LGen(\omega,\Omega); \db[ , \db]) $ to sections of the $ F $-Lie algebroid given by pairs of vector fields on $ \f M $ and functions on $ \f M $  given by
\begin{equation}\label{Eq: 3.2}
\f s:(f,h) \mapsto (X_{(f,h)}, f-h)\,.
\end{equation}   

\begin{thm}\label{Th: 3.1}
For the closed 2-form
\begin{equation*}
F = \Omega + d\omega\,
\end{equation*} 
the sheaf mapping
\begin{align*}
\f s:\LGen(\omega,\Omega)  
& 
\to \M X(\f M) \times  C^\infty(\f M)
\end{align*}
given by \eqref{Eq: 3.2} 
is a Lie algebra morphism. 
\end{thm}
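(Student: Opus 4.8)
The plan is to verify directly that $\f s$ intertwines the two brackets, i.e. that
$\f s\big(\db[(f_1,h_1);(f_2,h_2)\db]\big)=\db[\f s(f_1,h_1);\f s(f_2,h_2)\db]_F$
for all $(f_i,h_i)\in\LGen(\omega,\Omega)$. Write $(F_{12},H_{12}):=\db[(f_1,h_1);(f_2,h_2)\db]$ with $F_{12}=\{f_1,f_2\}$ and $H_{12}=\{f_1,h_2\}-\{f_2,h_1\}-d\omega(df_1\Sha,df_2\Sha)$ as in \eqref{Eq: 2.10}, so that the left-hand side of \eqref{Eq: 3.2} is $\big(X_{(F_{12},H_{12})};\,F_{12}-H_{12}\big)$. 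By \eqref{Eq: 3.1} the right-hand side is $\big([X_{(f_1,h_1)},X_{(f_2,h_2)}];\, X_{(f_1,h_1)}{\tecka}(f_2-h_2)-X_{(f_2,h_2)}{\tecka}(f_1-h_1)+F(X_{(f_1,h_1)},X_{(f_2,h_2)})\big)$. The equality of the first (vector-field) components requires no new computation: since $\LGen(\omega,\Omega)\subset\LGen(\omega)$ and $E{\tecka}f_i=0$, Theorem \ref{Th: 2.3} (specialized as in \eqref{Eq: 2.6}, giving \eqref{Eq: 2.10}) already yields $[X_{(f_1,h_1)},X_{(f_2,h_2)}]=X_{(F_{12},H_{12})}$. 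Hence everything reduces to matching the scalar second component.

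For that component I would expand each term via $X_{(f_i,h_i)}=df_i\Sha+h_i\,E$. Using $E{\tecka}f_j=0$ and $df_i\Sha{\tecka}f_j=\{f_i,f_j\}$ one gets $X_{(f_i,h_i)}{\tecka}f_j=\{f_i,f_j\}$ and $X_{(f_i,h_i)}{\tecka}h_j=\{f_i,h_j\}+h_i\,(E{\tecka}h_j)$, so the anchor-derivative part is $2\{f_1,f_2\}-\{f_1,h_2\}+\{f_2,h_1\}-h_1\,(E{\tecka}h_2)+h_2\,(E{\tecka}h_1)$. For $F(X_1,X_2)=\Omega(X_1,X_2)+d\omega(X_1,X_2)$ the Reeb identities $i_E\Omega=0$ and $i_Ed\omega=L_E\omega$ annihilate the $E$-directions: combining $\Omega\Fla\com\Lambda\Sha=q_1$ with $q_1(df_i)=df_i$ (again $f_i$ conserved) gives $\Omega(X_1,X_2)=\Omega(df_1\Sha,df_2\Sha)=-\{f_1,f_2\}$, while the mixed $d\omega$-terms contribute $h_2\,(L_E\omega)\Sha{\tecka}f_1-h_1\,(L_E\omega)\Sha{\tecka}f_2$ in addition to the retained $d\omega(df_1\Sha,df_2\Sha)$.

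Adding these contributions, the $2\{f_1,f_2\}$ from the derivatives and the $-\{f_1,f_2\}$ from $\Omega$ collapse to the single $\{f_1,f_2\}$ required, and the total becomes $F_{12}-H_{12}$ plus the remainder $h_2\big(E{\tecka}h_1+(L_E\omega)\Sha{\tecka}f_1\big)-h_1\big(E{\tecka}h_2+(L_E\omega)\Sha{\tecka}f_2\big)$. Each parenthesis is precisely condition (2) of Lemma \ref{Lm: 2.6}, hence vanishes because $(f_i,h_i)\in\LGen(\omega,\Omega)$, so the second components agree and $\f s$ is a morphism. The main obstacle is the sign and contraction bookkeeping in evaluating $\Omega$ and $d\omega$ on the lifts; the structural point I would emphasize is that the summand $\Omega$ in $F=\Omega+d\omega$ is exactly what converts the factor $2\{f_1,f_2\}$ produced by the anchor derivatives into the single Poisson bracket, so that $F=d\omega$ alone would fail to make $\f s$ a Lie algebra morphism.
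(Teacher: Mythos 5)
Your proposal is correct and takes essentially the same approach as the paper: a direct componentwise verification in which the vector-field parts agree by \eqref{Eq: 2.6}/\eqref{Eq: 2.10}, and the scalar parts are reconciled using $\Omega(df_1\Sha,df_2\Sha)=-\{f_1,f_2\}$, $i_E\Omega=0$, and condition (2) of Lemma \ref{Lm: 2.6}. The only cosmetic difference is that the paper keeps $F$ generic and reduces the discrepancy to $(F-d\omega-\Omega)(X_{(f_1,h_1)},X_{(f_2,h_2)})=0$, while you substitute $F=\Omega+d\omega$ from the outset and check that the remainder vanishes.
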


\begin{proof}
For $(f_i,h_i)\in \LGen(\omega,\Omega)$, $i=1,2,$
we have
\begin{multline*}
\db[
(f_1,h_1);(f_2,h_2)
\db]
=
\big(
\{f_1,f_2 \};
\{f_1,h_2 \} - \{f_2,h_1 \} - d\omega(df_1\Sha,df_2\Sha)
\big) 
\\
\mapsto
\big(d\{f_1,f_2 \}\Sha
+
\big(
\{f_1,h_2 \} - \{f_2,h_1 \} - d\omega(df_1\Sha,df_2\Sha)
\big) \, E;
\\
\{f_1,f_2 \} - \{f_1,h_2 \} + \{f_2,h_1 \} + d\omega(df_1\Sha,df_2\Sha)
\big)\,.
\end{multline*} 

On the other hand we have
\begin{multline*}
\db[
\f s(f_1,h_1);\f s(f_2,h_2)
\db]_F
=
\\
= \big(
d\{f_1,f_2 \}\Sha
+
\big(
\{f_1,h_2 \} - \{f_2,h_1 \} - d\omega(df_1\Sha,df_2\Sha)
\big) \, E
;
\\
2\,\{f_1,f_2 \} - \{f_1,h_2 \} + \{f_2,h_1 \}
\\
- h_1 E{\tecka} h_2 + h_2 E{\tecka}h_1 +  F(df_1\Sha + h_1 E,df_2\Sha + h_2 E)
\big)
\,.
\end{multline*}
The first parts of the above pairs are equal. The second parts are equal if and only if
\begin{multline*}
0
 =
\{f_1,f_2 \} -  d\omega(df_1\Sha,df_2\Sha)
- h_1 E{\tecka} h_2 + h_2 E{\tecka}h_1 +  F(df_1\Sha + h_1 E,df_2\Sha + h_2 E)
\,
\end{multline*}
which can be rewritten, by using  $\Omega(df_1\Sha,df_2\Sha)= - \Lambda(df_1,df_2)= - \{f_1,f_2\}$ (see \cite{JanMod09}), as
\begin{multline*}
 0
 = 
- h_1 (E{\tecka} h_2 - F(E,df_2\Sha)) + h_2 (E{\tecka}h_1 - F(E,df_1\Sha)) 
\\
+ (F-d\omega - \Omega)(df_1\Sha,df_2\Sha)
\,.
\end{multline*}
From $\Omega(E,df_i\Sha) = 0$\,,  $E{\tecka}h_i - d\omega(E,df_i\Sha) = 0$ (see Lemma \ref{Lm: 2.6})
it is equivalent with
\begin{equation*}
0 = (F - d\omega - \Omega)(X_{(f_1,h_1)},X_{(f_2,h_2)})
\end{equation*}
which is satisfied for $F = \Omega + d\omega$.
\end{proof}

\begin{lem}\label{Lm: 3.2}
Let us consider the $F$-Lie algebroid given by the closed 2-form $F= \Omega + d\omega.$
Let $(X_i,\breve{f}_i)\in \M X(\f M) \times  C^\infty(\f M)$, $i= 1,2$, be pairs such that $X_i$ are infinitesimal symmetries of $(\omega,\Omega)$ and $E{\tecka}\breve{f}_i= - E{\tecka}(\omega(X_i))$. Then the bracket $\db[,\db]_F$ of these pairs satisfy the same conditions.
\end{lem}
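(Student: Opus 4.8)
The plan is to check that the $F$-bracket $(Y,g):=\db[(X_1,\breve{f}_1);(X_2,\breve{f}_2)\db]_F$, whose first component is $Y=[X_1,X_2]$ and whose second component is $g=X_1{\tecka}\breve{f}_2-X_2{\tecka}\breve{f}_1+F(X_1,X_2)$, again satisfies the two hypotheses: that $Y$ is an infinitesimal symmetry of $(\omega,\Omega)$, and that $E{\tecka}g=-E{\tecka}(\omega(Y))$. The first condition is immediate and the second is the real content.

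For the first condition, from $L_{[X_1,X_2]}=L_{X_1}L_{X_2}-L_{X_2}L_{X_1}$ and $L_{X_i}\omega=L_{X_i}\Omega=0$ I get at once $L_Y\omega=L_Y\Omega=0$, so $Y$ is an infinitesimal symmetry of $(\omega,\Omega)$. Before attacking the second condition I would record the facts I need. Every infinitesimal symmetry of $(\omega,\Omega)$ commutes with $E$: by Corollary \ref{Cr: 2.12} it is also an infinitesimal symmetry of $(E,\Lambda)$, so $[E,X_i]=0$ (alternatively, $\omega([X_i,E])=L_{X_i}(i_E\omega)=0$ and $i_{[X_i,E]}\Omega=L_{X_i}(i_E\Omega)=0$ force $[X_i,E]\in\langle E\rangle\cap\Ker(\omega)=0$); by the Jacobi identity $[E,Y]=0$ as well. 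From $i_E\Omega=0$ and $d\Omega=0$ I get $L_E\Omega=0$, whence $L_E F=L_E(d\omega)=d(L_E\omega)$. Finally, writing $h_i:=\omega(X_i)$, the identity $(L_E\omega)(X_i)=E{\tecka}h_i-\omega([E,X_i])=E{\tecka}h_i$ holds, and likewise $(L_E\omega)(Y)=E{\tecka}(\omega(Y))$.

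I would then expand $E{\tecka}g$ in three pieces. Commuting $E$ past $X_i$ with $[E,X_i]=0$ and using the hypothesis $E{\tecka}\breve{f}_i=-E{\tecka}h_i$ turns the first two terms into $E{\tecka}(X_1{\tecka}\breve{f}_2)-E{\tecka}(X_2{\tecka}\breve{f}_1)=-X_1{\tecka}(E{\tecka}h_2)+X_2{\tecka}(E{\tecka}h_1)$. For the third term, $[E,X_i]=0$ gives $E{\tecka}(F(X_1,X_2))=(L_E F)(X_1,X_2)=d(L_E\omega)(X_1,X_2)$, and the intrinsic formula $d\theta(X_1,X_2)=X_1{\tecka}\theta(X_2)-X_2{\tecka}\theta(X_1)-\theta([X_1,X_2])$ applied to $\theta=L_E\omega$, together with $(L_E\omega)(X_i)=E{\tecka}h_i$ and $(L_E\omega)(Y)=E{\tecka}(\omega(Y))$, yields $d(L_E\omega)(X_1,X_2)=X_1{\tecka}(E{\tecka}h_2)-X_2{\tecka}(E{\tecka}h_1)-E{\tecka}(\omega(Y))$. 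Adding the three pieces, the four derivative terms $X_i{\tecka}(E{\tecka}h_j)$ cancel in pairs and leave exactly $E{\tecka}g=-E{\tecka}(\omega(Y))$, which is the required condition for $(Y,g)$.

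The verification that $Y$ is a symmetry and the commutation of $E$ with the $X_i$ are routine; the one place that demands care is the term $F(X_1,X_2)$. The main point I expect to be the obstacle is to recognise that $L_E$ annihilates $\Omega$, so that only the exact part $d\omega$ of $F$ survives through $d(L_E\omega)$, and then to carry out the Cartan-type expansion of $d(L_E\omega)(X_1,X_2)$ so that its three terms precisely absorb the leftover derivatives produced by the two function terms. Once $[E,X_i]=0$ and $(L_E\omega)(X_i)=E{\tecka}\omega(X_i)$ are in hand, the cancellation is forced and the closure under $\db[,\db]_F$ follows.
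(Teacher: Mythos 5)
Your proof is correct and takes essentially the same route as the paper: both arguments note that $[X_1,X_2]$ is automatically a symmetry and reduce the claim to checking $E{\tecka}g = -E{\tecka}(\omega([X_1,X_2]))$ via $[E,X_i]=0$, the hypothesis $E{\tecka}\breve{f}_i = -E{\tecka}(\omega(X_i))$, the Cartan formula, and $L_E\Omega=0$ (the paper reduces everything to $E{\tecka}(F(X_1,X_2)) = E{\tecka}(d\omega(X_1,X_2))$, while you equivalently compute $E{\tecka}(F(X_1,X_2)) = d(L_E\omega)(X_1,X_2)$ and expand). Your explicit derivation that infinitesimal symmetries of $(\omega,\Omega)$ commute with $E$ is a useful addition, since the paper invokes $[E,X_i]=0$ without restating why it holds.
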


\begin{proof}
$[X_1,X_2]$ is an infinitesimal symmetry of $(\omega,\Omega)$ so it is sufficient to prove that
$$
E{\tecka}\big(
X_1 \tecka \breve{f}_2 - X_2 \tecka \breve{f}_1
+ F(X_1,X_2)
\big)
= 
- E{\tecka}(\omega([X_1,X_2]))
$$
which can be rewritten as
\begin{multline*}
X_1 \tecka(E{\tecka} \breve{f}_2) - X_2 \tecka(E{\tecka} \breve{f}_1)
+ E{\tecka}(F(X_1,X_2))
=
\\
= 
- E{\tecka}(X_1\tecka (\omega(X_2)) - X_2\tecka (\omega(X_1) - d\omega(X_1,X_2))\,.
\end{multline*}
From the condition $E{\tecka}\breve{f}_i= - E{\tecka}(\omega(X_i))$ this equation is satisfied if and only if
$$
E{\tecka}(F(X_1,X_2)) = E{\tecka}(d\omega(X_1,X_2))\,
$$
which is satisfied for $F = \Omega + d\omega$ because of $L_E\Omega = 0$ and $L_EX_i = [E,X_i] = 0$. 
\end{proof}

\begin{thm}\label{Th: 3.3}
Let us consider the $F$-Lie algebroid given by the closed 2-form $F= \Omega + d\omega.$
Let $(X_i,\breve{f}_i)\in \M X(\f M) \times  C^\infty(\f M)$, $i= 1,2$, be pairs such that $X_i$ are infinitesimal symmetries of $(\omega,\Omega)$ and $E{\tecka}\breve{f}_i= - E{\tecka}(\omega(X_i))$. Then the sheaf mapping 
$$
\f r:\M X(\f M) \times  C^\infty(\f M)
\to
  C^{\infty}(\f M) \times  C^{\infty}(\f M)
$$
given by 
\begin{equation}\label{Eq: 3.4}
\f r:(X,\breve{f}) \mapsto (\omega(X) + \breve{f}, \omega(X))
\end{equation}
has values in $ \LGen(\omega,\Omega)  $ and it
is a Lie algebra morphism inverse to $\f s$.
\end{thm}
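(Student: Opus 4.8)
The plan is to show that $\f s$ and $\f r$ restrict to mutually inverse sheaf isomorphisms between $\LGen(\omega,\Omega)$ and the subsheaf $S\subset\M X(\f M)\times C^\infty(\f M)$ of pairs $(X,\breve{f})$ satisfying the stated hypotheses, and then to transport the Lie-algebra-morphism property of $\f s$ (Theorem \ref{Th: 3.1}) across this bijection. I would first record the easy half. For $(f,h)\in\LGen(\omega,\Omega)$ one has $\omega(X_{(f,h)})=\omega(df\Sha+h\,E)=h$, since $i_\omega\Lam=0$ gives $\omega(df\Sha)=0$ and $i_E\omega=1$. Hence
\[
\f r(\f s(f,h))=\f r(X_{(f,h)},f-h)=\big(\omega(X_{(f,h)})+(f-h),\,\omega(X_{(f,h)})\big)=(f,h),
\]
so $\f r\com\f s=\id$ and, in particular, $\f s$ is injective.

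Next I would check that $\f r$ sends $S$ into $\LGen(\omega,\Omega)$. Writing $(f,h)=\f r(X,\breve{f})=(\omega(X)+\breve{f},\,\omega(X))$, I verify conditions (1)--(3) of Lemma \ref{Lm: 2.6}. Condition (1), $E\tecka f=0$, is immediate from $E\tecka\breve{f}=-E\tecka(\omega(X))$. Condition (2) is handled by feeding $L_X\omega=0$ (which yields $d(\omega(X))=-i_Xd\omega$) together with $h=\omega(X)$ and $(L_E\omega)\Sha=(i_Ed\omega)\Sha$ into its left-hand side; after using $i_E\Omega=0$ and $i_Ei_Ed\omega=0$ it should reduce to $0$.

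The substantive point, and the one that I expect to be the main obstacle, is that both condition (3) of Lemma \ref{Lm: 2.6} and the remaining identity $\f s\com\f r=\id$ reduce to the single reconstruction statement
\[
d\big(\omega(X)+\breve{f}\big)\Sha=X-\omega(X)\,E .
\]
Indeed the $\langle E\rangle$-component of $\f s\com\f r=\id$ is automatic (apply $\omega$ to both sides), so everything collapses to matching the $\im(\Lam\Sha)$-components, and condition (3) is precisely the assertion that $i_{df\Sha}d\omega$ agrees with $i_{(X-\omega(X)E)}d\omega$. The difficulty is that the $\im(\Lam\Sha)$-part of $X$ must be recovered from the scalar data $\omega(X)$ and $\breve{f}$ alone. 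The plan here is to use that, for an infinitesimal symmetry of $(\omega,\Omega)$, $d\Omega=0$ and $L_X\Omega=0$ force $i_X\Omega=\Omega\Fla\big(X-\omega(X)E\big)$ to be closed and $\omega$-horizontal, whence $X-\omega(X)E=\Lam\Sha(i_X\Omega)$ by $\Lam\Sha\com\Omega\Fla=p_1$; one then identifies $i_X\Omega$ with $d(\omega(X)+\breve{f})$ modulo $\langle\omega\rangle$ via $d(\omega(X))=-i_Xd\omega$. The constraint $E\tecka\breve{f}=-E\tecka(\omega(X))$ is exactly what fixes the $E$-evaluation and so removes the $\langle\omega\rangle$-ambiguity that $\Sha$ annihilates; since the splitting \eqref{Eq: 1.7} is only defined modulo $\langle\omega\rangle$, this matching of representatives is where the genuine care is needed.

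Finally, the Lie-morphism property follows formally once the bijection is in place: $\f s$ is a Lie-algebra morphism by Theorem \ref{Th: 3.1}, its image $S$ is closed under $\db[,\db]_F$ by Lemma \ref{Lm: 3.2}, and $\f r=\f s^{-1}$; the inverse of a bijective Lie-algebra morphism is again a Lie-algebra morphism. As an alternative I would verify directly that $\f r\db[(X_1,\breve{f}_1);(X_2,\breve{f}_2)\db]_F=\db[\f r(X_1,\breve{f}_1);\f r(X_2,\breve{f}_2)\db]$ from \eqref{Eq: 3.1}, \eqref{Eq: 2.10}, the identity $\omega([X_1,X_2])=X_1\tecka\omega(X_2)-X_2\tecka\omega(X_1)-d\omega(X_1,X_2)$, and $F=\Omega+d\omega$, but invoking the bijection is the cleaner route.
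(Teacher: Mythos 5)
Your global strategy is sound and genuinely different from the paper's: the paper verifies that $\f r$ intertwines the brackets by the direct computation of its identities \eqref{Eq: 3.6} and \eqref{Eq: 3.7}, while you transport the morphism property across the bijection from Theorem \ref{Th: 3.1}, using Lemma \ref{Lm: 3.2} for closure of the target under $\db[,\db]_F$ and the fact that the inverse of a bijective Lie algebra morphism is again a morphism; that transport is legitimate, and your computation of $\f r\com\f s=\id$ is exactly the paper's closing remark. Both routes, however, funnel through the reconstruction identity $d(\omega(X)+\breve{f})\Sha = X-\omega(X)\,E$ (the paper's \eqref{Eq: 3.5}), which for you is simultaneously the content of $\f s\com\f r=\id$ and of conditions (2)--(3) of Lemma \ref{Lm: 2.6}, and it is there that your proposal has a genuine gap.

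That identity cannot be derived from the hypotheses as stated, because they constrain only $E\tecka\breve{f}$, i.e. only the $\langle\omega\rangle$-component of $d\breve{f}$ in the splitting $T^*\f M=\im(\Omega\Fla)\oplus\langle\omega\rangle$ --- and that is precisely the component that $\Sha$ annihilates; the horizontal component, which alone determines $d\breve{f}\Sha$, is left completely free. So your remark that the constraint $E\tecka\breve{f}=-E\tecka(\omega(X))$ ``removes the $\langle\omega\rangle$-ambiguity that $\Sha$ annihilates'' has the roles reversed. Concretely, if $(X,\breve{f})$ satisfies the hypotheses, so does $(X,\breve{f}+g)$ for any $g\in C^\infty_E(\f M)$ with $dg\Sha\neq 0$; the right-hand side $X-\omega(X)\,E$ is unchanged while the left-hand side changes by $dg\Sha$. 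In the contact case with $X=E$ and $\breve{f}=g$ one gets $\f r(E,g)=(1+g,1)$, which violates \eqref{Eq: 2.5}: its left-hand side equals $dg\neq 0$. For the same reason your check of condition (2) is incomplete: using $d(\omega(X))=-i_Xd\omega$ it reduces to $(L_E\omega)\bigl(X-d(\omega(X)+\breve{f})\Sha\bigr)=0$, which again presupposes the reconstruction rather than following from $L_X\omega=0$ and $i_E\Omega=0$ alone. The missing ingredient is the stronger pointwise relation $d\breve{f}=i_XF$ with $F=\Omega+d\omega$ (the analogue of condition 1), $d\breve{f}=\underline{X}\con\wha F$, in Section 4), which yields $d(\omega(X)+\breve{f})=i_X\Omega$ exactly and makes your $\Lambda\Sha(i_X\Omega)=X-\omega(X)\,E$ argument close; evaluating on $E$ then recovers the stated $E$-condition as a consequence. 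To be fair, the paper's own proof does not derive \eqref{Eq: 3.5} from the stated hypotheses either --- it simply sets $f=\omega(X)+\breve{f}$ --- so it implicitly assumes this stronger relation; your attempt, by trying to actually prove \eqref{Eq: 3.5}, makes that reliance visible but does not repair it.
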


\begin{proof}
First we have to prove that the mapping \eqref{Eq: 3.4} has values in $ \LGen(\omega,\Omega)  $. Let us assume that $X$ is an infinitesimal symmetry of $(\omega,\Omega)$. By Theorem \ref{Th: 2.5} $X = df\Sha + h\, E$, where $E{\tecka}f =0$ and the condition \eqref{Eq: 2.5} is satisfied. It is easy to see that $h = \ome(X)$. Then 
$$
f = \ome(X) + \breve{f}
$$
is a conserved function.
 So we have
\begin{equation}\label{Eq: 3.5}
X = d(\omega(X) + \breve{f})\Sha + \omega(X)\, E\,
\end{equation}
and the pair $ (\omega(X) + \breve{f},\omega(X)) $ is in
$ \LGen(\omega,\Omega)  $.

Now, we have
\begin{multline*}
\db[
(X_1,\breve{f}_1 );(X_2,\breve{f}_2)
\db]_F 
\mapsto
\\
\mapsto
\big(
\omega([X_1,X_2]) + X_1\tecka \breve{f}_2 - X_2\tecka \breve{f}_1 + \Omega(X_1,X_2)
+ d\omega(X_1,X_2);
\\
\omega([X_1,X_2])
\big)\,.
\end{multline*}
On the other hand 
\begin{multline*}
\db[\f r
(X_1,\breve{f}_1 );\f r(X_2,\breve{f}_2)
\db] 
=
\big(
\{\omega(X_1) + \breve{f}_1 , \omega(X_2) + \breve{f}_2\};
\\
\{\omega(X_1) + \breve{f}_1 , \omega(X_2) \}
- \{\omega(X_2) + \breve{f}_2 , \omega(X_1) \}
\\
- d\omega(d(\omega(X_1) + \breve{f}_1)\Sha,d(\omega(X_2) + \breve{f}_2)\Sha)
\big)
\,.
\end{multline*}
These expressions are equal if and only if the following two equations are satisfied
\begin{multline}\label{Eq: 3.6}
\omega([X_1,X_2]) + X_1.\breve{f}_2 - X_1.\breve{f}_2 + \Omega(X_1,X_2)
+ d\omega(X_1,X_2) =
\\ 
\{\omega(X_1) + \breve{f}_1 , \omega(X_2) + \breve{f}_2\}\,,
\end{multline}
\begin{multline}\label{Eq: 3.7}
\omega([X_1,X_2])
=
\{\omega(X_1) + \breve{f}_1 , \omega(X_2) \}
- \{\omega(X_2) + \breve{f}_2 , \omega(X_1) \}
\\ 
- d\omega(d(\omega(X_1) + \breve{f}_1)\Sha,d(\omega(X_2) + \breve{f}_2)\Sha)\,.
\end{multline}
By using \eqref{Eq: 3.5} and $(\Lambda\Sha\otimes \Lambda\Sha)(\Ome) = - \Lambda$ we can rewrite the left hand side of \eqref{Eq: 3.6} as
\begin{multline*}
X_1.(\omega(X_2) + \breve{f}_2)
- X_2.(\omega(X_1) + \breve{f}_1)
+ \Omega(X_1,X_2) =
\\
= i_{d(\omega(X_1) + \breve{f}_1)\Sha + \omega(X_1)\, E}d(\omega(X_2) + \breve{f}_2) 
-
i_{d(\omega(X_2) + \breve{f}_2)\Sha + \omega(X_2)\, E}d(\omega(X_1) + \breve{f}_1) 
\\
+ \Omega(d(\omega(X_1) + \breve{f}_1)\Sha,d(\omega(X_2) + \breve{f}_2)\Sha)=
\\
=
\{\omega(X_1) + \breve{f}_1 , \omega(X_2) + \breve{f}_2\}
- \{\omega(X_2) + \breve{f}_2 , \omega(X_1) + \breve{f}_1\}
\\
- \{\omega(X_1) + \breve{f}_1 , \omega(X_2) + \breve{f}_2\} =
\\
=
\{\omega(X_1) + \breve{f}_1 , \omega(X_2) + \breve{f}_2\}\,.
\end{multline*}
Similarly, by using $i_Ed(\omega(X)) = d\omega(E,X)$, we can rewrite the left hand side of \eqref{Eq: 3.7} as
  \begin{multline*}
X_1.(\omega(X_2))
- X_2.(\omega(X_1))
- d\omega(X_1,X_2) =
\\
= i_{d(\omega(X_1) + \breve{f}_1)\Sha + \omega(X_1)\, E}d(\omega(X_2)) 
-
i_{d(\omega(X_2) + \breve{f}_2)\Sha + \omega(X_2)\, E}d(\omega(X_1))
\\
- d\omega(X_1,X_2) =
\\
 =\{\omega(X_1) + \breve{f}_1 , \omega(X_2) \}
- \{\omega(X_2) + \breve{f}_2 , \omega(X_1) \}
\\ 
- d\omega(X_1 - \omega(X_1) E ,X_2 - \omega(X_2) E)
\\
=
\{\omega(X_1) + \breve{f}_1 , \omega(X_2) \}
- \{\omega(X_2) + \breve{f}_2 , \omega(X_1) \}
\\ 
- d\omega(d(\omega(X_1) + \breve{f}_1)\Sha,d(\omega(X_2) + \breve{f}_2)\Sha)\,
\end{multline*}
which proves that \eqref{Eq: 3.4} is a Lie algebra morphism.

Finally, it is easy to see that $(\f r \com \f s)(f,h) = (f,h)$ for all $(f,h) \in \LGen(\omega,\Omega)$.
\end{proof}

\section{Examples }
In this section we recall results obtained for structures of the classical phase space. These results were the motivation of the paper.

We assume 
\emph{classical space-time} 
to be an oriented and time oriented
4--dimensional manifold
$\f E$
equipped with 
 a 
 Lorentzian metric
$g\,,$
with signature
$(1,3),$ \cite{JanMod08}. We denote by 
$ (x^{\lambda}) = (x^{0},x^{i}) $, $\lambda = 0,1,2,3,$ local coordinates on $ \f E $ such that $\der_0$ is time-like and $\der_i$ are space-like.
A 
\emph{motion} 
is defined to be a 1--dimensional time-like
submanifold of space-time.
We define the 
\emph{classical (Einsteinian) phase space} 
to be the open
subspace
${\M J}_1{\f E} \subset J_1({\f E},1)$
consisting of all 
1--jets (1st order contact elements) of motions.
So elements of ${\M J}_{1x}{\f E}$ are classes
of non-parametrized curves which have in a point $x\in \f E$ the same
tangent line lying inside the light cone. 
$\pi^1_0:{\M J}_1{\f E} \to {\f E}$ is a fibred manifold but not an
affine bundle!
We have the induced coordinate chart $(x^\lam, x^i_0)$.

The metric $g$ gives naturally the unscaled horizontal
\emph{time form} 
$$
\wha\tau
 :
{\M J}_1{\f E} \to T^*{\f E}\,, \quad \wha\tau = \wha\tau_\lambda \, dx^\lambda \,.
$$ 

\subsection{Infinitesimal symmetries of the gravitational contact structure }
The pair $(-\widehat\tau, \Omega\Elg)$, where 
$$
\Omega\Elg = - d\widehat\tau : 
{\M J}_1\f E \to \bigwedge^2T^*{\M J}_1\f E\,,
$$ 
is the 
contact (gravitational) regular
structure on $ {\M J}_1\f E $.
The dual Jacobi structure is given by a pair $ (-\wha\gamma\Elg,\Lambda\Elg) $, where $ \wha\gamma\Elg $ and  $\Lambda\Elg $ are naturally given by the metric field (for details see \cite{JanMod08}).

\medskip
By Corollary \ref{Cr: 2.8} infinitesimal symmetries of the gravitational contact phase structure
  are Hamilton-Jacobi lifts of conserved functions, i.e. they are of the type
$
X = df\Sha + f\,\wha\gamma\Elg\,,
$ 
where $\wha\gamma\Elg\tecka f = 0$ and, moreover, $f = \wha\tau(X) = \wha\tau(\underline{X}) $. Here
$\underline{X} = T\pi^1_0(X): {\M{J}}_1\f E \to T\f E$ is a \emph{generalized vector field} in the terminology of 
\cite{Olv86}.
So infinitesimal symmetries are of the type
\begin{equation}\label{Eq: 4.1}
X = d(\wha\tau(\underline{X}))\Sha + \wha\tau(\underline{X})\,\wha\gamma\Elg\,,
\end{equation}
where the following conditions are satisfied

\medskip
1. 
{(Projectability)}
 The Hamilton-Jacobi lift \eqref{Eq: 4.1}  projects on  $\underline{X}$.

\smallskip
2. {(Conservation)}
$\wha\tau(\underline{X})$ is conserved, i.e. $\gamma\Elg {\tecka} (\wha\tau(\underline{X})) = 0$.

\smallskip
The following results were proved in \cite{Jan14}.

\begin{lem}\label{Lm:4.1}
A symmetric $k$-vector field $\overset{k}{K}$, $ k\ge 1 $, on $\f E$ admits the generalized vector field satisfying the projectability condition. Such generalized vector fields are given by
\begin{equation*}
\underline{X}[\overset{k}{K}] = k\, \underset{(k-1)-times}{\underbrace{\wha\tau\con \dots\con \wha\tau \con}}\overset{k}{K} -  {(k-1)} \overset{k}{K}(\wha\tau,\dots,\wha\tau)\, \wha{\text{\K{d}}}: {\M J}_1\f E\to T\f E\,,
\end{equation*}
where $ \wha{\text{\K{d}}} = T\pi^{1}_0(\wha\gamma\Elg) $.
Then  we obtain the induced phase function
$$
\wha\tau\big(\underline{X}[\overset{k}{K}]\big)=\overset{k}{K}(\wha\tau) = \overset{k}{K}(\wha\tau,\dots,\wha\tau)
= \overset{k}{K}{}^{\lambda_1\dots\lambda_k}\,
\wha\tau_{\lam_1}\dots \wha\tau_{\lam_k}\,.\eqno\END
$$
\end{lem}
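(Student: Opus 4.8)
\textbf{Proof proposal for Lemma \ref{Lm:4.1}.}

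The plan is to verify the two assertions separately: first the projectability of the Hamilton-Jacobi lift built from the generalized vector field $\underline{X}[\overset{k}{K}]$, and then the computation of the induced phase function $\wha\tau(\underline{X}[\overset{k}{K}])$. For the second (easier) part, I would simply substitute the proposed formula for $\underline{X}[\overset{k}{K}]$ into $\wha\tau(\cdot)$ and contract. Since $\wha{\text{\K{d}}} = T\pi^1_0(\wha\gamma\Elg)$ and $\wha\gamma\Elg$ is (by construction of the dual Jacobi pair) annihilated by $\wha\tau$, i.e.\ $\wha\tau(\wha{\text{\K{d}}}) = 0$, the term proportional to $(k-1)\overset{k}{K}(\wha\tau,\dots,\wha\tau)\,\wha{\text{\K{d}}}$ drops out when paired with $\wha\tau$. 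The surviving term $k\,\wha\tau\con\cdots\con\wha\tau\con\overset{k}{K}$ contracted once more against $\wha\tau$ gives $k$ copies of $\wha\tau$ fed into the symmetric tensor, but by symmetry this equals $\overset{k}{K}(\wha\tau,\dots,\wha\tau)$, yielding exactly $\overset{k}{K}{}^{\lambda_1\dots\lambda_k}\wha\tau_{\lam_1}\dots\wha\tau_{\lam_k}$ in coordinates. The combinatorial factor $k$ from the contraction and the overall multiplicative structure are what I would track carefully here.

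The substantive part is the projectability claim, and this is where I expect the main obstacle to lie. Projectability of the Hamilton-Jacobi lift $X = d(\wha\tau(\underline{X}))\Sha + \wha\tau(\underline{X})\,\wha\gamma\Elg$ onto the generalized vector field $\underline{X}$ means that $T\pi^1_0(X) = \underline{X}$, i.e.\ the horizontal part of the lift recovers the prescribed $\underline{X}[\overset{k}{K}]$. The strategy is to compute $T\pi^1_0$ of both summands of $X$. The second summand contributes $\wha\tau(\underline{X})\,\wha{\text{\K{d}}}$ after projection (using $\wha{\text{\K{d}}} = T\pi^1_0(\wha\gamma\Elg)$), and the first summand, $T\pi^1_0(d(\overset{k}{K}(\wha\tau))\Sha)$, must be identified using the explicit coordinate form of $\Lambda\Elg$ and the fact that $\wha\tau_\lambda$ depends on $\f E$ and the jet coordinate $x^i_0$. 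I would differentiate $\overset{k}{K}(\wha\tau) = \overset{k}{K}{}^{\lambda_1\dots\lambda_k}\wha\tau_{\lam_1}\cdots\wha\tau_{\lam_k}$, apply $\Lambda\Elg\Sha$, and then project, collecting the terms. The delicate point is that the sharp $\Sha$ of an exact form involves derivatives with respect to both $x^\lambda$ and $x^i_0$; the $x^i_0$-derivatives of $\wha\tau$ are what produce the $\overset{k}{K}(\wha\tau,\dots,\wha\tau)\,\wha{\text{\K{d}}}$ correction with the precise coefficient $-(k-1)$, matching the claimed formula.

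Rather than grinding through the full tensor calculation, I would invoke the explicit expressions for $\wha\gamma\Elg$, $\Lambda\Elg$, and $\wha\tau$ from \cite{JanMod08} together with the structural identities $\im(\Lambda\Elg\Sha) = \Ker(\wha\tau)$ and $\wha\tau(\wha\gamma\Elg) = 1$ (up to sign) that characterize the dual Jacobi pair. The key technical fact I would rely on — and the crux of obtaining the coefficient $-(k-1)$ — is how $T\pi^1_0$ interacts with the $\Lambda\Elg\Sha$-lift of a differential, namely that the horizontal projection of $d\phi\Sha$ equals the ordinary metric-sharp of the horizontal differential of $\phi$ corrected by the vertical ($x^i_0$) derivatives contracted against $\wha{\text{\K{d}}}$. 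Verifying that this correction combines with the contribution from $\wha\tau(\underline{X})\,\wha\gamma\Elg$ to reproduce exactly $k\,\wha\tau\con\cdots\con\wha\tau\con\overset{k}{K} - (k-1)\overset{k}{K}(\wha\tau,\dots,\wha\tau)\,\wha{\text{\K{d}}}$ is the heart of the proof; everything else is bookkeeping of indices and symmetry factors.
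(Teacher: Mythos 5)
Your proposal must stand on its own here, because the paper itself contains no proof of Lemma \ref{Lm:4.1}: it is quoted from \cite{Jan14} (note the sentence ``The following results were proved in \cite{Jan14}'' and the box closing the statement), where it is obtained by a direct coordinate computation with the explicit expressions of $\wha\tau$, $\wha\gamma\Elg$ and $\Lambda\Elg$. And it does not stand: the scalar computation, which you present as the easy half, fails at its key step. You assert $\wha\tau(\wha{\text{\K{d}}})=0$ on the grounds that $\wha\gamma\Elg$ is ``annihilated by $\wha\tau$''. The normalization is the opposite one: the contact form of the gravitational structure is $-\wha\tau$ and its Reeb field is $-\wha\gamma\Elg$, so $i_{(-\wha\gamma\Elg)}(-\wha\tau)=1$, i.e. $\wha\tau(\wha\gamma\Elg)=1$; what vanishes is $i_{\wha\gamma\Elg}\Omega\Elg$ and $i_{\wha\tau}\Lambda\Elg$, which you appear to have conflated with $\wha\tau(\wha\gamma\Elg)=0$. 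Since $\wha\tau$ is horizontal (valued in $T^*\f E$), $\wha\tau(\wha\gamma\Elg)$ depends only on $T\pi^1_0(\wha\gamma\Elg)=\wha{\text{\K{d}}}$, hence $\wha\tau(\wha{\text{\K{d}}})=1$. The correct computation is
\begin{equation*}
\wha\tau\big(\underline{X}[\overset{k}{K}]\big)
= k\,\overset{k}{K}(\wha\tau,\dots,\wha\tau)
- (k-1)\,\overset{k}{K}(\wha\tau,\dots,\wha\tau)\,\wha\tau(\wha{\text{\K{d}}})
= \overset{k}{K}(\wha\tau)\,,
\end{equation*}
and the whole point of the coefficients $k$ and $-(k-1)$ is this cancellation. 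You reach the stated answer only through two compensating mistakes: the $\wha{\text{\K{d}}}$-term is dropped via the false $\wha\tau(\wha{\text{\K{d}}})=0$, and the prefactor $k$ of the contracted term is silently discarded (``by symmetry this equals $\overset{k}{K}(\wha\tau,\dots,\wha\tau)$'' --- it equals $k\,\overset{k}{K}(\wha\tau,\dots,\wha\tau)$). Correcting either slip alone would have produced $k\,\overset{k}{K}(\wha\tau)$ and led you to conclude the lemma is false.

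For the projectability half, your strategy --- apply $T\pi^1_0$ to $X=d(\overset{k}{K}(\wha\tau))\Sha+\overset{k}{K}(\wha\tau)\,\wha\gamma\Elg$ and track the $x^i_0$-dependence of $\wha\tau_\lambda$ --- is indeed the route of the computation in \cite{Jan14}. But as written it is a program, not a proof: the decisive identity, namely
\begin{equation*}
T\pi^1_0\big(d(\overset{k}{K}(\wha\tau))\Sha\big)
= k\,\wha\tau\con\dots\con\wha\tau\con\overset{k}{K}
- k\,\overset{k}{K}(\wha\tau,\dots,\wha\tau)\,\wha{\text{\K{d}}}\,,
\end{equation*}
which upon adding the projected Reeb contribution $\overset{k}{K}(\wha\tau)\,\wha{\text{\K{d}}}$ yields the coefficient $-(k-1)$, is exactly what you defer to ``bookkeeping'' and never verify; it requires the explicit coordinate form of $\Lambda\Elg$ from \cite{JanMod08} and is the actual content of the lemma. (As a consistency check, $df\Sha\in\Ker(\wha\tau)$ since $i_{\wha\tau}\Lambda\Elg=0$, so $\wha\tau$ applied to the right-hand side must vanish --- and it does only with the factor $k$ in the second term and with $\wha\tau(\wha{\text{\K{d}}})=1$; under your assumption $\wha\tau(\wha{\text{\K{d}}})=0$ even this sanity check breaks.) So the proposal has a genuine gap in its substantive half and a genuine error in the half it does carry out.
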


\begin{lem}\label{Lm:4.2}
 Let $\overset{0}{K}$ be a space-time function. Then
 $ \wha\gamma\Elg.\overset{0}{K} = 0 $ if and only if $\overset{0}{K}$ is a constant.
The phase function $ \overset{k}{K}(\wha\tau) $, $ k\ge 1 $, is conserved with respect to the gravitational Reeb vector field, i.e. $ \wha\gamma\Elg\tecka \overset{k}{K}(\wha\tau) = 0 $, if and only if $\overset{k}{K}$ is a Killing $k$-vector field.
\hfill\END
\end{lem}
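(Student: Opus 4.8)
The plan is to reduce both assertions to the classical fact that the integral curves of the gravitational Reeb vector field $\wha\gamma\Elg$ project onto geodesics of $g$, so that for any phase function $\phi$ the value $\wha\gamma\Elg\tecka\phi$ is the proper-time derivative of $\phi$ along the lifted geodesic. First I would record, from \cite{JanMod08} and \cite{Jan14}, the coordinate form of $\wha\gamma\Elg$ (the geodesic spray on ${\M J}_1\f E$): its horizontal part is the normalized velocity $\wha{\text{\K{d}}} = T\pi^1_0(\wha\gamma\Elg)$ and its vertical part carries the Christoffel symbols, so that along an integral curve one has $\wha\tau_\lambda = u_\lambda$ with $u$ the unit future time-like velocity ($g(u,u)=1$) and $\wha\gamma\Elg\tecka\phi = d\phi/ds$, $s$ the proper time. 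This geodesic-spray identification is the one genuinely external input.

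For the first assertion (the $k=0$, ``constant / Killing $0$-vector'' case): since $\overset{0}{K}$ is a space-time function its differential is horizontal, so only the horizontal part of $\wha\gamma\Elg$ contributes and
\begin{equation*}
\wha\gamma\Elg\tecka\overset{0}{K} = \wha{\text{\K{d}}}{}^\lambda\,\der_\lambda\overset{0}{K} = u^\lambda\,\der_\lambda\overset{0}{K}\,.
\end{equation*}
Requiring this to vanish on all of ${\M J}_1\f E$ means it vanishes for every unit future time-like $u$ at each point of $\f E$. The future mass shell is not contained in any hyperplane through the origin, so the linear form $u\mapsto u^\lambda\der_\lambda\overset{0}{K}$ vanishing on it forces $d\overset{0}{K}=0$, i.e. $\overset{0}{K}$ is constant; the converse is immediate. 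This explains why the case is isolated from the Killing condition.

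For $k\ge 1$ I would evaluate $\wha\gamma\Elg\tecka\overset{k}{K}(\wha\tau)$ along a geodesic. Writing $\overset{k}{K}(\wha\tau)=\overset{k}{K}{}^{\lam_1\dots\lam_k}u_{\lam_1}\dots u_{\lam_k}$ and using $D u_\lambda/ds=0$ together with $\nab g=0$, the vertical (Christoffel) terms of $\wha\gamma\Elg$ combine with the horizontal derivative into a single covariant derivative, giving (with indices lowered by $g$ in the last equality)
\begin{equation*}
\wha\gamma\Elg\tecka\overset{k}{K}(\wha\tau) = \big(\nab_\mu\overset{k}{K}{}^{\lam_1\dots\lam_k}\big)\,u^\mu\,u_{\lam_1}\dots u_{\lam_k} = \big(\nab_{(\mu}\overset{k}{K}_{\lam_1\dots\lam_k)}\big)\,u^\mu u^{\lam_1}\dots u^{\lam_k}\,,
\end{equation*}
the symmetrization appearing because the totally symmetric velocity product annihilates all but the symmetrized part. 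This expression is a homogeneous polynomial of degree $k+1$ in $u$. If it vanishes for every unit time-like $u$, then by homogeneity it vanishes on the whole interior of the future cone (an open set), hence identically as a polynomial, so $\nab_{(\mu}\overset{k}{K}_{\lam_1\dots\lam_k)}=0$, i.e. $\overset{k}{K}$ is a Killing $k$-vector field. Conversely the Killing equation makes the contracted expression vanish identically, so $\overset{k}{K}(\wha\tau)$ is conserved along $\wha\gamma\Elg$.

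The main obstacle is the covariant computation of $\wha\gamma\Elg\tecka\overset{k}{K}(\wha\tau)$: one must use the explicit geodesic-spray form of $\wha\gamma\Elg$, in particular its vertical component, and verify that the fiber derivatives assemble precisely into $\nab_\mu$ with no residual terms from the normalization $g(u,u)=1$ built into $\wha\tau$. Once this is granted, both equivalences follow from the elementary spanning/polynomial argument for time-like vectors used above.
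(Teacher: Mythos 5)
The paper itself contains no proof of this lemma: it is imported verbatim from \cite{Jan14} (the statement closes with the end-of-proof square and is preceded by ``The following results were proved in \cite{Jan14}''), so your proposal can only be compared with the standard argument of that source --- and it matches it in substance. The route is the same: identify $\wha\gamma\Elg$ with the (properly normalized) geodesic flow on ${\M J}_1\f E$, compute $\wha\gamma\Elg\tecka\,\overset{k}{K}(\wha\tau)=\nabla^{(\mu}\overset{k}{K}{}^{\lambda_1\dots\lambda_k)}\,\wha\tau_\mu\wha\tau_{\lambda_1}\cdots\wha\tau_{\lambda_k}$ (in \cite{Jan14} this is done in adapted phase coordinates, in your version invariantly along lifted geodesics, where $\wha\tau$ restricts to the parallel unit covelocity --- which is precisely what disposes of your worry about residual terms from the normalization $g(u,u)=1$), and then conclude by a pointwise algebraic argument. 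Your treatment of the one genuine subtlety --- that the vanishing is only given on the unit mass shell, not on all of the tangent space --- is correct: homogeneity $P(tu)=t^{k+1}P(u)$ spreads the zero set from the hyperboloid to the open interior of the future cone, so the polynomial $P$ vanishes identically and the symmetrized covariant derivative is zero, i.e.\ $\overset{k}{K}$ is Killing; the $k=0$ case via the linear spanning of each tangent space by unit future time-like vectors is likewise sound (with connectedness of $\f E$ implicitly used to pass from $d\overset{0}{K}=0$ to ``constant''). In short: the proof is correct and essentially the one the paper delegates to \cite{Jan14}, phrased coordinate-freely.
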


\begin{thm}\label{Th: 4.3}
The Hamilton-Jacobi lift of a phase function 
\begin{equation}\label{Eq: 4.2}
K= \overset{0}{K} + \sum_{k\ge 1}\overset{k}{K}(\wha\tau) \,
\end{equation}
is an infinitesimal symmetry of the gravitational contact phase structure $ (-\wha\tau,\Omega\Elg) $ if and only if $\overset{0}{K}$ is a constant and $\overset{k}{K}$, $k\ge 1$,  are Killing $k$-vector fields. 
Moreover, for $k=1$ the corresponding infinitesimal symmetry coincides with the jet flow lift $\M J_1\overset{1}{K}$ and is projectable on space-time. 
For $k\ge 2$ the corresponding infinitesimal symmetry is hidden.
\hfill\END
\end{thm}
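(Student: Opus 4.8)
The plan is to prove Theorem \ref{Th: 4.3} by combining the characterization of infinitesimal symmetries of the contact structure from Corollary \ref{Cr: 2.8} with the two auxiliary lemmas on Killing multivector fields. Recall that by Corollary \ref{Cr: 2.8} every infinitesimal symmetry of the gravitational contact structure $(-\wha\tau,\Omega\Elg)$ is the Hamilton-Jacobi lift $X = df\Sha + f\,\wha\gamma\Elg$ of a function $f$ that is \emph{conserved}, i.e. $\wha\gamma\Elg\tecka f = 0$. Thus the entire content of the theorem reduces to determining precisely which phase functions of the special form \eqref{Eq: 4.2} are conserved, and then identifying the geometric meaning of the summands.

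First I would fix a phase function $K = \overset{0}{K} + \sum_{k\ge 1}\overset{k}{K}(\wha\tau)$ as in \eqref{Eq: 4.2} and analyze the conservation condition $\wha\gamma\Elg\tecka K = 0$ termwise. Since $\wha\gamma\Elg\tecka$ is a derivation (a Lie derivative along the Reeb field), linearity lets me treat each homogeneous component $\overset{k}{K}(\wha\tau)$ separately. For the degree-zero term, Lemma \ref{Lm:4.2} says $\wha\gamma\Elg\tecka\overset{0}{K}=0$ exactly when $\overset{0}{K}$ is a constant. For each $k\ge 1$, the same lemma says $\wha\gamma\Elg\tecka\overset{k}{K}(\wha\tau)=0$ exactly when $\overset{k}{K}$ is a Killing $k$-vector field. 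The one point that needs a word of care is that the components $\overset{k}{K}(\wha\tau)$ have different polynomial degrees in the fibre coordinate $\wha\tau$, so their Reeb derivatives are fibrewise-independent; hence the sum is conserved if and only if every term is conserved separately. Assembling these equivalences gives at once the main "if and only if" assertion: $X$ is an infinitesimal symmetry precisely when $\overset{0}{K}$ is constant and each $\overset{k}{K}$ ($k\ge 1$) is Killing.

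It remains to justify the two concluding sentences about the nature of the symmetry for $k=1$ versus $k\ge 2$. For $k=1$ the generalized vector field of Lemma \ref{Lm:4.1} reduces to $\underline{X}[\overset{1}{K}]=\overset{1}{K}$ itself (the correction term with factor $(k-1)$ vanishes), so it is a genuine space-time vector field; I would then identify the Hamilton-Jacobi lift \eqref{Eq: 4.1} with the first-jet prolongation $\M J_1\overset{1}{K}$ of the Killing vector field and note that it satisfies the projectability condition of Lemma \ref{Lm:4.1}, hence projects onto space-time. For $k\ge 2$ the correction term survives, so $\underline{X}[\overset{k}{K}]$ genuinely depends on the jet coordinate $\wha\tau$ and is not the prolongation of any space-time vector field; such a symmetry is \emph{hidden} in the standard terminology.

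The main obstacle I anticipate is not in the logical skeleton, which is a clean termwise application of the two lemmas, but in making the degree/independence argument precise enough to conclude that a sum of $\overset{k}{K}(\wha\tau)$ is conserved only when each summand is: one must check that the Reeb derivatives of distinct homogeneous components cannot cancel against one another. Here I would invoke the explicit coordinate expression $\overset{k}{K}(\wha\tau)=\overset{k}{K}{}^{\lambda_1\dots\lambda_k}\wha\tau_{\lambda_1}\dots\wha\tau_{\lambda_k}$ from Lemma \ref{Lm:4.1}, observe that $\wha\gamma\Elg\tecka\overset{k}{K}(\wha\tau)$ is again homogeneous of the same degree $k$ in the $\wha\tau_\lambda$, and conclude that the conservation equation decouples by degree. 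Everything else is a direct citation of the already-proved statements.
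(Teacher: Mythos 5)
Your reduction is the same one the paper sets up, but be aware that the paper itself contains no proof of Theorem \ref{Th: 4.3}: together with Lemmas \ref{Lm:4.1} and \ref{Lm:4.2} it is explicitly imported from \cite{Jan14}, the paper only recording the framing you also use, namely that by Corollary \ref{Cr: 2.8} the Hamilton--Jacobi lift of $K$ is an infinitesimal symmetry of $(-\wha\tau,\Omega\Elg)$ if and only if $\wha\gamma\Elg\tecka K=0$. At that level your ``if'' direction (each summand conserved by Lemma \ref{Lm:4.2}, hence the sum conserved) and your $k=1$ versus $k\ge 2$ discussion via Lemma \ref{Lm:4.1} are consistent with what the paper asserts. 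The problem is the step you yourself flagged as the main obstacle: the claim that the conservation equation ``decouples by degree''.

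That step has a genuine gap. The components $\wha\tau_\lambda$ are not free fibre coordinates on ${\M J}_1\f E$: the time form is normalized by the metric, so $g^{\lambda\mu}\wha\tau_\lambda\wha\tau_\mu$ is a nonzero constant $c$, and over each $x\in\f E$ the values of $(\wha\tau_\lambda)$ sweep out one sheet of a hyperboloid in $T^*_x\f E$, not an open set. Hence restrictions of homogeneous polynomials of different degrees to this constraint set are \emph{not} independent, and Reeb derivatives of distinct components can cancel: take $\overset{2}{K}{}^{\lambda\mu}=u\,g^{\lambda\mu}$ with $u$ a nonconstant space-time function and $\overset{0}{K}=-c\,u$; then $\overset{2}{K}(\wha\tau)=c\,u$, so $K=\overset{0}{K}+\overset{2}{K}(\wha\tau)\equiv 0$ is trivially conserved although $\overset{0}{K}$ is not constant and $\overset{2}{K}$ is not Killing. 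So components of equal parity can conspire through the trace part, and this non-uniqueness of the representation \eqref{Eq: 4.2} is exactly what the actual proof in \cite{Jan14} has to handle; on $T^*\f E$, where the momenta are free coordinates, your decoupling argument would be valid (cf.\ Remark \ref{Rm: 4.1}), but the pull-back along $-\wha\tau$ lands on the unit mass shell, where it is not. There is also a smaller slip: $\wha\gamma\Elg\tecka\overset{k}{K}(\wha\tau)$ is homogeneous of degree $k+1$, not $k$ (up to a factor it equals $\nabla^{(\lambda_0}\overset{k}{K}{}^{\lambda_1\dots\lambda_k)}\,\wha\tau_{\lambda_0}\cdots\wha\tau_{\lambda_k}$). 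That slip is harmless for a single homogeneous term, since a homogeneous polynomial vanishing on an open cone of timelike covectors vanishes identically --- which is why Lemma \ref{Lm:4.2} holds --- but it does not repair the cross-degree cancellation. As written, your argument establishes the ``if'' direction and the ``only if'' direction only for a single homogeneous component, not for a general sum \eqref{Eq: 4.2}.
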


\begin{rmk}\label{Rm: 4.1}
{\rm 
It is very well known that Killing multivector fields generates 
on $T^*\f E$ \emph{functions  
constant of motion} 
(functions constant on lifts of geodesic curves) (see, for instance, \cite{Som73}).
In \cite{Jan15} it was proved that if we consider the mapping 
$
-\wha \tau: {\M J}_1\f E \to T^*\f E\,,
$
then a conserved phase function of type \eqref{Eq: 4.2} is obtained as the pull-back
$K=-\wha\tau^*(\tilde K)$
of a function $\tilde K$ constant of motion.
}\hfill\END
\end{rmk}

\subsection{Infinitesimal symmetries of the total  almost-cosymplectic-contact phase structure}
Let us assume an
\emph{electromagnetic (Max\-well) field} 
which is
 a  closed 2--form
$\wha F : \f E \to  \wedge^2T^*\f E \,.$
Then we can consider
the
total phase 2--form
$$
\Ome\Joi =: \Omega\Elg + \Ome\Ele = -d\wha\tau + \tfrac{1}{2}\,\widehat F \,
$$
and the pair
$(-\widehat \tau, \, \Ome\Joi)$
turns out to be an 
almost-cosymplectic-contact
structure of the phase space,
 i.e.
$\Ome\Joi$ 
is closed and 
$\widehat\tau\wed \Ome\Joi \wed \Ome\Joi \wed \Ome\Joi$
is a volume form. 

The 
dual almost-coPoisson-Jacobi pair 
is then given by the 
total Reeb vector field
$\wha\gam\Joi = \wha\gamma\Elg+ \wha\gamma\Ele$ and the
total  phase 2-vector
\color{black}
$\Lam\Joi= \Lambda\Elg +\Lambda\Ele$, where
$
\wha\gam\Ele
$
and $\Lambda\Ele$ are $\f E$-vertical given by $ g $ and $\wha F$, see \cite{JanMod08}. 

In \cite{Jan15} it was proved that
all phase infinitesimal symmetries of the total phase structure are vector fields of the type
\begin{equation*}
X = d(\wha\tau(\underline{X}) + \breve{f})\Sha{}\Joi + \wha\tau(\underline{X})\, \wha\gamma\Joi\,
\end{equation*}
where  $\underline{X}$ is a generalized vector field and  $\breve{f}\in C^\infty(\f E)$ such that:

1) $ d\breve{f} = \underline{X}\con \wha F\,; $

2)
({Projectability})
The vector field $X$ projects on $ \underline{X} $;

3)
({Conservation})
$ \wha\gamma\Joi.(\wha\tau(\underline{X}) + \breve{f}) = 0 $.

\smallskip
The projectability condition is the same as in the case of the contact gravitational structure which follows from the fact that the fields $\wha\gamma\Ele$ and $\Lambda\Ele$ are $\f E$-vertical. So it is sufficient to describe conditions that the function \eqref{Eq: 4.2}, where $\breve{f} = \overset{0}{K}$, is conserved. 

\begin{thm}\label{Th: 4.4} 
{\rm \cite{Jan15}}
A phase function \eqref{Eq: 4.2}
is conserved, i.e. $ \wha\gam\Joi\tecka K = 0 $, if and only if
\begin{gather}\label{Eq: 4.4}
g^{\rho\lambda}\, \der_\rho\overset{0}{K} + \overset{1}{K}{}^{\rho}\, \wha F^{\lambda}{}_\rho
 =
0\,,
\\
\nabla^{(\lambda_1}\overset{k}{K}{}^{\lambda_2\dots\lambda_{k+1})} + 
(k+1)\, \overset{k+1}{K}{}^{\rho(\lambda_1\dots\lambda_k}\, \wha F^{\lambda_{k+1})}{}_\rho
 = \label{Eq: 4.5}
0\,,
\end{gather}
for $ k \ge  1 $\,.
\hfill$\square$
\end{thm}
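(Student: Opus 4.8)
The plan is to compute $\wha\gam\Joi\tecka K$ directly in the induced chart $(x^\lambda,x^i_0)$ and to read off the conditions by collecting the resulting terms according to their homogeneity degree in the components $\wha\tau_\lambda$. Since $\wha\gam\Joi = \wha\gamma\Elg + \wha\gamma\Ele$ and the action $\tecka$ is linear in the acting field, I would split
\begin{equation*}
\wha\gam\Joi\tecka K = \wha\gamma\Elg\tecka K + \wha\gamma\Ele\tecka K
\end{equation*}
and analyse the gravitational and the electromagnetic contribution separately, using that $\wha\gamma\Ele$ (together with $\Lambda\Ele$) is $\f E$-vertical and is the Lorentz-force field determined by $g$ and $\wha F$ (see \cite{JanMod08}).

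For the gravitational part I would reuse the computation behind Lemma \ref{Lm:4.2}: the field $\wha\gamma\Elg$ is the geodesic spray of $g$, so acting on a homogeneous phase function $\overset{k}{K}(\wha\tau)$, $k\ge1$, it produces the symmetrized covariant derivative
\begin{equation*}
\wha\gamma\Elg\tecka\overset{k}{K}(\wha\tau) = \big(\nab^{(\lambda_1}\overset{k}{K}{}^{\lambda_2\dots\lambda_{k+1})}\big)\,\wha\tau_{\lambda_1}\cdots\wha\tau_{\lambda_{k+1}}\,,
\end{equation*}
a phase function homogeneous of degree $k+1$, while on the degree-zero part it gives $\wha\gamma\Elg\tecka\overset{0}{K} = (g^{\rho\lambda}\der_\rho\overset{0}{K})\,\wha\tau_\lambda$, homogeneous of degree $1$. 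This is precisely the Killing-tensor characterization of Lemma \ref{Lm:4.2}, now kept in its unreduced form.

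For the electromagnetic part the key step is to evaluate the vertical derivative of the $\wha\tau$-polynomials. Writing $\wha\gamma\Ele$ in coordinates as the Lorentz-force vertical field $\propto \wha F^\lambda{}_\rho\,\wha\tau^\rho$ acting along the fibre directions, a direct computation gives, for $m\ge1$,
\begin{equation*}
\wha\gamma\Ele\tecka\overset{m}{K}(\wha\tau) = m\,\wha F^{\lambda_1}{}_\rho\,\overset{m}{K}{}^{\rho\lambda_2\dots\lambda_m}\,\wha\tau_{\lambda_1}\cdots\wha\tau_{\lambda_m}\,,
\end{equation*}
a phase function of degree $m$ (with $\wha\gamma\Ele\tecka\overset{0}{K}=0$, since $\overset{0}{K}$ has no fibre dependence). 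Thus the electromagnetic field lowers the relevant degree by one relative to the gravitational one: its action on $\overset{k+1}{K}$ and the gravitational action on $\overset{k}{K}$ both land in degree $k+1$.

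Collecting in $\wha\gam\Joi\tecka K=0$ all terms homogeneous of a fixed degree then yields the stated conditions. The degree-$1$ component combines $\wha\gamma\Elg\tecka\overset{0}{K}$ with $\wha\gamma\Ele\tecka\overset{1}{K}$ and gives \eqref{Eq: 4.4}; the degree-$(k+1)$ component, $k\ge1$, combines $\wha\gamma\Elg\tecka\overset{k}{K}$ with $\wha\gamma\Ele\tecka\overset{k+1}{K}$ and gives \eqref{Eq: 4.5}. The main obstacle is twofold: first, the explicit coordinate evaluation of the vertical electromagnetic action above, including the correct combinatorial factor $(k+1)$ arising from differentiating a degree-$(k+1)$ monomial; and second, justifying the term-by-term vanishing, since the $\wha\tau_\lambda$ are not free coordinates but are constrained by the normalization of the unscaled time form, so one must argue that the polynomial identity forced on the open set of admissible covelocities suffices to conclude the symmetric-tensor identities \eqref{Eq: 4.4}--\eqref{Eq: 4.5}.
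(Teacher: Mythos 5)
You should first note that this paper contains no proof of Theorem \ref{Th: 4.4}: it is recalled from \cite{Jan15} (hence the $\square$ inside the statement), so your proposal can only be compared with the computation in that reference and with the surrounding material here (Lemma \ref{Lm:4.2}, Corollary \ref{Cr: 4.5}, Remark \ref{Rm: 4.1}). Your computational skeleton is correct and is the natural route: the splitting $\wha\gam\Joi\tecka K=\wha\gamma\Elg\tecka K+\wha\gamma\Ele\tecka K$; the gravitational action $\wha\gamma\Elg\tecka\overset{k}{K}(\wha\tau)=\nabla^{(\lambda_1}\overset{k}{K}{}^{\lambda_2\dots\lambda_{k+1})}\,\wha\tau_{\lambda_1}\cdots\wha\tau_{\lambda_{k+1}}$, which is the unreduced form of Lemma \ref{Lm:4.2}, with $\nabla^{\lambda}\overset{0}{K}=g^{\rho\lambda}\der_\rho\overset{0}{K}$ at $k=0$; and the vertical Lorentz-force action $\wha\gamma\Ele\tecka\overset{m}{K}(\wha\tau)=m\,\overset{m}{K}{}^{\rho(\lambda_1\dots\lambda_{m-1}}\wha F^{\lambda_m)}{}_\rho\,\wha\tau_{\lambda_1}\cdots\wha\tau_{\lambda_m}$, whose Leibniz factor $m$ gives exactly the coefficient $(k+1)$ in \eqref{Eq: 4.5} once the degree-$(k+1)$ contributions of $\overset{k}{K}$ and $\overset{k+1}{K}$ are paired, and whose $m=1$ case combines with the $k=0$ gravitational term to give \eqref{Eq: 4.4} (consistent, after lowering an index, with Corollary \ref{Cr: 4.5}). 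One point worth making explicit in your second step: $\wha F^{\lambda}{}_{\rho}\wha\tau^{\rho}$ is $g$-orthogonal to $\wha\tau$ by antisymmetry of $\wha F$, so it is tangent to the fibre of normalized covelocities and the fibre differentiation of the normalized $\wha\tau$ produces no correction terms; this is why the computation is clean.

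The genuine gap is the final step, which you flag but propose to close in a way that would fail. Fibrewise $\wha\tau$ ranges over the unit pseudosphere $g^{\lambda\mu}\wha\tau_\lambda\wha\tau_\mu=\pm1$, and on that quadric homogeneous components of \emph{different} degrees are not independent: for any symmetric tensor $S$ one has $(g\odot S)(\wha\tau)=\pm S(\wha\tau)$ as phase functions. Hence $\wha\gam\Joi\tecka K=0$ forces the total polynomial only into the ideal generated by the constraint, and cross-degree cancellations really occur: with $\wha F=0$, take a non-Killing vector field $v$ and set $\overset{3}{K}{}^{\lambda\mu\nu}=g^{(\lambda\mu}v^{\nu)}$, $\overset{1}{K}=\mp v$; then $K=\overset{1}{K}(\wha\tau)+\overset{3}{K}(\wha\tau)\equiv0$ is trivially conserved, yet \eqref{Eq: 4.5} fails at $k=1$. (A \emph{single} homogeneous component vanishing on the pseudosphere does vanish identically, which is why Lemma \ref{Lm:4.2} is safe; it is the mixing of degrees of equal parity that breaks the separation.) So ``the polynomial identity on the open set of admissible covelocities'' does not yield term-by-term vanishing, and the ``only if'' direction is not available at the level of the phase function alone, since the family $(\overset{k}{K})$ is not determined by $K$. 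The missing ingredient is exactly the correspondence of Remark \ref{Rm: 4.1}: regard \eqref{Eq: 4.2} as the pull-back by $-\wha\tau$ of a function on $T^*\f E$ polynomial in the momenta, where the momenta are free fibre coordinates and homogeneous components do separate; equivalently, fix a canonical representation of $K$ (e.g.\ by iteratively removing $g\odot(\cdot)$ summands, i.e.\ a trace normalization) before collecting degrees. With that ingredient added, your argument proves both directions; as written it proves only sufficiency.
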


\begin{crl}\label{Cr: 4.5}
Let us assume a special phase function
$
K = \overset{0}{K} + \overset{1}{K}(\wha\tau)\,.
$
Then the conditions \eqref{Eq: 4.4} and \eqref{Eq: 4.5} are reduced to
$$
\der_\rho \overset{0}{K} - \overset{1}{K}{}^{\sigma}\, \wha F_{\sigma\rho} = 0\,,\qquad \nabla^{(\lambda_1}\overset{1}{K}{}^{\lambda_2)} = 0
$$
and we obtain the result of 
\cite{JanVit12}, i.e. $\overset{1}{K} $ is a Killing vector field and $\overset{0}{K} $ and  $\overset{1}{K} $ are related by the formula $d\overset{0}{K} = \overset{1}{K}\con \wha F$. Moreover, the corresponding infinitesimal symmetry
is the jet flow lift ${\M J}_1\overset{1}{K} $ which
projects on $\overset{1}{K}$.
\hfill$\square$
\end{crl}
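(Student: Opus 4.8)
The plan is to specialize Theorem~\ref{Th: 4.4} to the truncated phase function $K = \overset{0}{K} + \overset{1}{K}(\wha\tau)$, in which $\overset{k}{K} = 0$ for all $k \ge 2$, and then to rewrite the two surviving equations in the claimed intrinsic form. First I would dispose of the higher conditions. For $k \ge 2$ the equation \eqref{Eq: 4.5} couples $\overset{k}{K}$ with $\overset{k+1}{K}$, both of which vanish, so it is identically satisfied and imposes nothing. For $k = 1$ the electromagnetic term $2\,\overset{2}{K}{}^{\rho(\lambda_1}\,\wha F^{\lambda_2)}{}_\rho$ drops out because $\overset{2}{K} = 0$, and \eqref{Eq: 4.5} collapses to $\nabla^{(\lambda_1}\overset{1}{K}{}^{\lambda_2)} = 0$, which is exactly the Killing equation for $\overset{1}{K}$ (in agreement with Lemma~\ref{Lm:4.2}).

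Next I would treat the degree-zero equation \eqref{Eq: 4.4}, namely $g^{\rho\lambda}\der_\rho\overset{0}{K} + \overset{1}{K}{}^{\rho}\wha F^{\lambda}{}_\rho = 0$. The only manipulation required is to lower the free index $\lambda$ with the metric: contracting with $g_{\lambda\mu}$ turns the first term into $\der_\mu\overset{0}{K}$ and the second into $\overset{1}{K}{}^{\rho}\wha F_{\mu\rho}$, and then invoking the antisymmetry $\wha F_{\mu\rho} = -\wha F_{\rho\mu}$ and relabelling indices produces $\der_\rho\overset{0}{K} - \overset{1}{K}{}^{\sigma}\wha F_{\sigma\rho} = 0$. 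Read intrinsically this is $d\overset{0}{K} = \overset{1}{K}\con\wha F$, so that $\overset{0}{K}$ and $\overset{1}{K}$ are tied by the Lorentz-force relation of \cite{JanVit12}, and $\overset{1}{K}$ is a Killing vector field.

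Finally, the claim that the associated symmetry is the jet flow lift is inherited from the degree-one part of the preceding gravitational analysis. Lemma~\ref{Lm:4.1} with $k=1$ gives $\underline{X}[\overset{1}{K}] = \overset{1}{K}$, and since the electromagnetic contributions $\wha\gamma\Ele$ and $\Lambda\Ele$ are $\f E$-vertical the projection onto space-time is unaffected; hence, exactly as in Theorem~\ref{Th: 4.3}, the corresponding infinitesimal symmetry coincides with $\M J_1\overset{1}{K}$ and projects on $\overset{1}{K}$. The whole argument is routine specialization rather than genuine computation; the single point deserving care is the sign generated by the antisymmetry of $\wha F$ when \eqref{Eq: 4.4} is rewritten with its index lowered, which is precisely what converts the $\wha F^{\lambda}{}_\rho$ appearing in Theorem~\ref{Th: 4.4} into the $-\wha F_{\sigma\rho}$ of the stated reduced form.
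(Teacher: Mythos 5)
Your proposal is correct and follows exactly the route the paper intends: the corollary is stated without proof precisely because it is the routine specialization of Theorem \ref{Th: 4.4} to $\overset{k}{K}=0$ for $k\ge 2$ (so \eqref{Eq: 4.5} survives only at $k=1$ as the Killing equation), with the identification of the symmetry as the jet flow lift ${\M J}_1\overset{1}{K}$ taken from \cite{JanVit12} and the verticality of $\wha\gamma\Ele$, $\Lambda\Ele$, just as you argue. Your index-lowering of \eqref{Eq: 4.4} and the sign from the antisymmetry of $\wha F$, yielding $\der_\rho\overset{0}{K}-\overset{1}{K}{}^{\sigma}\wha F_{\sigma\rho}=0$, i.e. $d\overset{0}{K}=\overset{1}{K}\con\wha F$, is the one computational point and you handle it correctly.
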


\begin{rmk}\label{Rm: 4.2}
{\rm
Let us assume a phase function
$
K =  \overset{k}{K}(\wha\tau)\,, \quad k\ge 2\,.
$
Then the condition \eqref{Eq: 4.5} gives
$$
\nabla^{(\lambda_1}\overset{k}{K}{}^{\lambda_2\dots\lambda_{k+1})} = 0\,,\qquad
\overset{k}{K}{}^{\rho(\lambda_1\dots\lambda_{k-1}}\, \wha F^{\lambda_k)}{}_\rho = 0\,
$$
and we obtain that $ \overset{k}{K} $ is a \emph{Killing--Maxwell} $k$-vector field. 
But the corresponding lift has to satisfy also the condition 1)  
which is of the form $\underline{X}[K]\con \wha F = 0 $. 
This condition implies $\wha{\text{\K{d}}} \,\con\, \wha F = 0\,,$ 
which implies $ \wha F \equiv 0$ and the structure 
is reduced to the gravitational case.
This implies that there are no non-projectable (hidden) infinitesimal symmetries generated by Killing--Maxwell $k$-vector fields for $k\ge 2$.

So all infinitesimal symmetries of $(-\wha\tau,\Ome\Joi)$ are projectable and can be generated by pairs $(\underline{X},\breve{f})$ of Killing vector fields and spacetime functions such that 
$
d\breve{f} = \underline{X}\con \wha F.
$
Such pairs are sections of the Lie algebroid $T\f E\oplus \B R \to \f E$ with the bracket $\db[ ; \db]_{\wha F}$ (see Section \ref{Sec: LA} and
\cite{JanVit12}).
The sections of the Lie algebroid described in Section \ref{Sec: LA} are obtained as the 1-jet flow lifts of $ \underline{X} $ and the pull-backs of $ \breve{f} $.
}\hfill\END
\end{rmk}


\end{document}